\documentclass{article}

\usepackage{amsmath,amsfonts,amsthm,amstext,amssymb,amscd,amsbsy}

\usepackage{indentfirst}
\usepackage[dvips]{color}
\usepackage{color}
\usepackage{epsfig}
\usepackage{hyperref}

\usepackage{lscape}
\usepackage{pdflscape}

\usepackage{pstricks, pst-plot}
\usepackage{pictex,dcpic}
\usepackage{tikz}
\usepackage{multicol}
\usetikzlibrary{arrows,decorations.markings}
\usetikzlibrary{matrix}
\usetikzlibrary{graphs}
\usetikzlibrary{backgrounds}

\hyphenation{IMECC UNICAMP}

\setlength{\textwidth}{16.5 cm}              
\setlength{\textheight}{21.5 cm}             
\setlength{\oddsidemargin}{0.1 cm}           
\setlength{\evensidemargin}{0.1 cm}           

\newtheorem{theorem}{Theorem}[section]
\newtheorem{proposition}[theorem]{Proposition}
\newtheorem{lemma}[theorem]{Lemma}
\newtheorem{corollary}[theorem]{Corollary}
\newtheorem{definition}[theorem]{Definition}
\newtheorem{remark}[theorem]{Remark}
\newtheorem{example}[theorem]{Example}


\newcommand{\RR}{\mathbb R}

\newcommand{\mf}{\mathfrak}
\newcommand{\ad}{\mbox{ad}}
\newcommand{\F}{\mathbb{F}_{\Theta}}
\newcommand{\Pt}{R_{\Theta}}
\newcommand{\p}{\mf{p}}
\newcommand{\m}{\mf{m}}
\newcommand{\Rt}{R_{\mf{t}}}
\def\DynkinNodeSize{3.5mm}
\def\DynkinArrowLength{3mm}
\tikzset{
  dnode/.style={
    circle,
    inner sep=0pt,
    minimum size=\DynkinNodeSize,
    fill=white,
    draw},
  middlearrow/.style={
    decoration={markings,
      mark=at position 0.6 with
      {\draw (0:0mm) -- +(+135:\DynkinArrowLength); \draw (0:0mm) -- +(-135:\DynkinArrowLength);},
    },
    postaction={decorate}
  },
  leftrightarrow/.style={
    decoration={markings,
      mark=at position 0.999 with
      {
      \draw (0:0mm) -- +(+135:\DynkinArrowLength); \draw (0:0mm) -- +(-135:\DynkinArrowLength);
      },
      mark=at position 0.001 with
      {
      \draw (0:0mm) -- +(+45:\DynkinArrowLength); \draw (0:0mm) -- +(-45:\DynkinArrowLength);
      },
    },
    postaction={decorate}
  },
  sedge/.style={
  },
  dedge/.style={
    middlearrow,
    double distance=0.5mm,
  },
  tedge/.style={
    middlearrow,
    double distance=1.0mm+\pgflinewidth,
    postaction={draw}, 
  },
  infedge/.style={
    leftrightarrow,
    double distance=0.5mm,
  },
}
\begin{document} 
\title{$\mathcal{G}_1$ structures on flag manifolds}

\author{Luciana A. Alves \thanks{Federal University of Uberl\^andia, luciana.postigo@gmail.com}\hspace{.1cm} and Neiton Pereira da Silva \thanks{Federal University of Uberl\^andia, neitonps@gmail.com}}
\maketitle




\begin{abstract}
 
Let $\F=U/K_\Theta$ be a generalized flag manifold, where $K_\Theta$ is the centralizer of a torus in  $U$. We study $U$-invariant almost Hermitian structures on $\F$. The classification of these structures are naturally related with the system $R_t$ of t-roots associated to $\F$. We introduced the notion of connectedness by triples zero sum in a general set of linear functional and proved that t-roots are connected by triples zero sum. Using this property, the invariant $\mathcal{G}_1$ structures on $\F$ are completely classified. We also study the K\"ahler form and classified the invariant quasi K\"ahler structures on $\F$, in terms of t-roots.

\end{abstract}

 Mathematics Subject Classifications: 53C55;	53D15;	22F30 .  \newline

\noindent Keywords: Flag manifolds; t-roots; connectedness by triples zero sum; almost Hermitian manifold; $\mathcal{G}_1$ structures.

\section{Introduction}

An almost Hermitian manifold is a differentiable manifold of even dimension $M$ endowed with a almost complex structure $J$ and a Riemannian metric $g(\cdot, \cdot )$ such that  $g( JX, JY ) =g( X, Y )$, for all $X, Y \in \mathfrak{X}(M)$. Let $\Omega (X,Y)=g(JX,Y)$ be the K\"ahler form and $\nabla$ the Riemannian connection. The pair $(g,J)$ is a K\" ahler structure on $M$ if $J$ is integrable and the associated K\"ahler form is closed (i.e. $d\Omega=0$) or equivalently $\nabla J=0$. According to \cite{GH}, various authors have studied certain types of almost Hermitian manifolds with the aim of generalizing geometry K\" ahler. A pre-K\"ahlerian structure has linear type if 
\begin{eqnarray}\label{pre k}
(\nabla_X J)Y+A_1 J(\nabla_{JX}J)Y +A_2 J(\nabla_X J)Y + A_3(\nabla_{JX}J)Y + A_4 (\nabla_Y J)X\nonumber \\+ A_5 J (\nabla_{JY}J)X + A_6 J(\nabla_Y J)X + A_7 (\nabla_{JY}J)X=0
\end{eqnarray}
with $A_i\in\mathbb{R}$, (see \cite{Vidal-Hervella}). For instance, the structure $(g,J)$ is quasi-K\"ahler if $(\nabla_XJ)Y-J(\nabla_{JX}J)Y=0$ for all $X,Y \in \mathfrak{X}(M)$; nearly K\"ahler if $(\nabla_X J)Y+J(\nabla_Y J)X=0$; almost K\"ahler if $d\Omega=0$ and semi-K\"ahlerian if $\delta\Omega(X)=0$, where $\delta\Omega(X)$ denotes co-derivative of the K\"ahler form, (cf. \cite{G}). It is usual to denotes these structures by QK, NK, AK and SK, respectively.

Hervella and Vidal, found in \cite{Vidal-Hervella} a new pre-K\"aherian structures, which they called by \emph{$\mathcal{G}_1$ structures}. The pair $(g,J)$ is a $\mathcal{G}_1$ structure if it satisfies $g(N(X,Y),X)=0$, where $N(X,Y)$ denotes the Nijenhuis tensor of the structure $J$. They also proved that any polynomial of the form \ref{pre k}, with $A_i\in \mathbb{R}$, gives a known pre-K\"ahlerian structure or the $\mathcal{G}_1$ structures.  

The $\mathcal{G}_1$ structures have an interested property: Let $\varphi\colon (M, g, J)\longrightarrow(M^0, g^0, J^0)$ be a conformal diffeomorphism, where $J^0X^0=(JX)^0$. Then SK, NK and AK are not conserved by $\varphi$, (see \cite{G}). On the other hand, $\mathcal{G}_1$ structures are conformally invariant, (cf. \cite{F-H}). 

Let $G$ be a complex simple Lie group with Lie algebra $\mathfrak{g}$ and $P_\Theta$ a parabolic subgroup of $G$, a generalized flag manifold (or simply flag manifold) is the homogeneous space $\F =G/P_\Theta$. If $U$ is a compact real form of $G$, then $\F=U/K_\Theta$, where $K_\Theta$ is the centralizer of a torus in  $U$. We denote by $\mathfrak{u}$ and $\mathfrak{k}_\Theta$ the Lie algebra of $U$ and $K_\Theta$ respectively.  We study $U$-invariant almost Hermitian structures on  flag manifolds $\F$. Such structures consists of a pair $(g,J)$ where $g$ is a $U$-invariant Riemannian metric and $J$ is a $U$-invariant almost complex structure on $\F$, which is compatible with the metric $g$. It is well known that for each invariant complex structure $J$ on $\F$, there exist a unique, up to homotheties, invariant metric $g$ such that the pair is $(g, J)$ is a K\"ahler structure and $g$ is an Einstein metric (see \cite{BH}; \cite{Besse} 8.95). 

  In \cite{GH}, all almost Hermitian structures were classified  into sixteen classes. In the case of flag manifolds some of these classes coincide, for example, in \cite{SM N}  San Martin-Negreiros  proved that for most of full flag manifolds the sixteen classes of Gray-Hervella reduce to three classes: K\"ahler (K), Quasi K\"ahler - QK  (or (1,2)-symplectic) and $\mathcal{G}_1$ structures (or $W_1\oplus W_3\oplus W_4$). Besides they proved that the only exception is the full flag of type $A_2$, where there exist an invariant almost Hermitian structure nearly K\"ahler which is not K\"ahler. Thus, in the case of full flag manifolds, they got a positive answer for the Wolf-Gray conjecture, which is: \emph{  Let $U/K$ be a homogeneous space of a compact Lie group $U$ which is not Hermitian symmetric and such that the isotropy $K$ has maximal rank. Then there are invariant almost Hermitian structures on $U/K$ which are nearly K\"ahler but not K\"ahler if and only if the isotropy subalgebra is the fixed point set of an autormorphism of order three}, (cf. \cite{Wolf-Gray}). This conjecture was also proved for generalized flag manifolds in \cite{SM R}. 

Concerning the three classes of invariant almost Hermitian structures on flag manifolds, the K\"ahler structures on flag manifolds were extensively studied by several authors (see for example \cite{Alek e Perol} and \cite{Besse} p.224),  class (1,2)-symplectic was completed classified and well understood in \cite{SM N}. In this paper we obtain classify completely and obtain a well understood of $\mathcal{G}_1$ structures on  flag manifolds $\F$.

An important invariant of a flag manifold $\F$ is the system of t-roots $R_t$, which is a set of linear functional defined as restriction of the root system $R$ of $\mathfrak{g}$ to real form $\mathfrak{t}$ of the center $Z(\mf{k}_{\Theta}^\mathbb{C})$ of the complexification of the subalgebra $\mf{k}_{\Theta}$. The system of t-roots $R_t$ provides a unique decomposition of tangent space $T_o\F=\m_1\oplus\cdots\oplus\m_s$ into irreducible and inequivalent $ad(\mf{k}_{\Theta})$-submodules (\cite{Sie}). Thus all $G$-invariant tensor on $\F$ can be described by means of t-roots. For example, any invariant almost complex structure $J$ on a flag manifold $\F$ is determined by a set of signs $\{\varepsilon_\delta = \pm 1, \delta\in R_t\}$, satisfying $-\varepsilon_\delta=\varepsilon_{-\delta}$, (see Proposition \ref{iacs}) and any invariant metric $g$ on $\F$ has the form  $g(\cdot, \cdot)=- \lambda_1 B(\cdot,\cdot)|_{\m_1\times\m_1}-\cdots-\lambda_s B(\cdot,\cdot)|_{\m_s\times\m_s}$, where $B(\cdot,\cdot)$ denotes the Killing form of $\mf{g}$. Thus the definition condition of any class from the sixteen classes of Gray - Hervella reduces to a algebraic expression depending on the parameters $\lambda_\delta$ and $\varepsilon_\delta$. 

We define the term connected by triple zero sum for a general set of linear functional, which for the system of t-roots $R_t$ can be state as following:\emph{ we say that two t-roots $\eta, \delta \in R_t$, $\eta\neq \pm \delta$, are connected by triples zero sum (tzs) if there exists a chain of triples $T_{i,j,k}^p=\{\xi_i,\xi_j,\xi_k\}\subset R_t$ with $\xi_i+\xi_j+\xi_k=0$ such that $\pm\eta\in T_{i,j,k}^1$, $\pm\delta\in T_{i,j,k}^n$ and $T_{i,j,k}^p\cap T_{i^{\prime},j^{\prime},k^{\prime}}^{p+1}\neq \emptyset$,  $1\leq p\leq n$.}  The key of our study on $\mathcal{G}_1$ -structures on $\F$ is the following property we prove: For any flag manifold the associated system $R_t$ of t-roots is connected by tzs, i.e., any two (non symmetric) t-roots in $R_t$ can be connected by tzs (see Theorem \ref{connected tzs t-roots}). Using this result we prove, in section 10, that the metric induced by the Killing form on $\F$ (the normal metric) is the unique invariant metric on $\F$ which is a $\mathcal{G_1}$ structure with respect to any invariant almost complex structure (see Theorem \ref{metric G1}). We think that the notion of connectedness by tzs can be very useful for the study of others invariants geometric aspect on flag manifolds.

In section 2 we give the description of flag manifold as complex and real homogeneous spaces using Lie theory. In section 3 we introduce the notion of connection by triples zero sum for roots of the system root $R$ of $\mf{g}$ and proved that in any irreducible root system, any two roots are connected by tzs. This section is a preparation to investigate the connection by tzs in t-roots. 

In section 6 we study the invariant almost complex structures (abbreviated by iacs) on $\F$. We get a description of iacs on flag manifolds in terms of t-roots, which extends the similar result in \cite{SM N} and  \cite{SM R}. As a consequence of this we obtain that every iacs $J$ on a isotropy irreducible flag manifold is integrable. We also give an example of a iacs on a flag manifold with two isotropy summand which is not integrable.

In section 9 we present a study of the K\"ahler form $\Omega$ on $\F$ associated to the structure $(g, J)$, which extends the results obtained for full flag manifolds obtained in \cite{SM N}. In particular, we obtain that classes almost K\"ahler and K\"ahler coincide on $\F$.

In the last section we show how two structures $(g, J)$ and $(g^0, J^0)$ can be equivalent by means of the action of the Weyl group of $G$ on these structures. Equivalence here means that $(g, J)$ and $(g^0, J^0)$ are associated by a bi-homorphic map $\F\longrightarrow\F$. Thus equivalent structures share the same class of invariant almost Hermitian structures. 

\section{Flag manifolds}
\label{sec:1}
In this section we set up our notation and present the standard
theory of partial (or generalized) flag manifolds associated with semisimple
Lie algebras (see for example \cite{SM N} for similar description).

 Let $\mathfrak{g}$ be a finite-dimensional semisimple complex Lie
algebra and $G$ a Lie group with Lie algebra $\mathfrak{g}$. Consider a Cartan subalgebra $\mathfrak{h}$ of $\mathfrak{g}$. We denote by $R$ the system of
roots of $(\mathfrak{g},\mathfrak{h})$. A root $\alpha\in R$ is a linear
functional on $\mathfrak{g}$. It determines uniquely an element
$H_{\alpha}\in\mathfrak{h}$ by the Riesz representation $\alpha(X)=B(
X,H_\alpha)$, $X\in\mathfrak{g}$, with respect to the Killing form
$B(\cdot,\cdot)$ of $\mf{g}$. The Lie algebra $\mathfrak{g}$ has the following
decomposition
\[
\mathfrak{g}=\mathfrak{h}\oplus\sum_{\alpha\in R}\mathfrak{g}_{\alpha}
\]
where $\mathfrak{g}_{\alpha}$ is the one-dimensional root space corresponding to $\alpha$. Besides the eigenvectors $E_{\alpha}\in\mathfrak{g}_{\alpha}$ satisfy the following equation

\begin{equation}
\left[  E_{\alpha},E_{-\alpha}\right]  =B\left(
E_{\alpha},E_{-\alpha}\right) H_{\alpha}. \label{igualdade}
\end{equation}

We fix a system $\Sigma$ of simple roots of $R$ and denote by
$R^+$ and $R^{-}$ the corresponding set of positive and
negative roots, respectively. Let $\Theta\subset\Sigma$ be a
subset, define
\begin{align}
R_{\Theta}:=\langle\Theta\rangle\cap R \quad \text{and}\quad
\Pt^\pm:=\langle\Theta\rangle\cap R^{\pm}.\nonumber
\end{align}
We denote by $R_M:=R\setminus\Pt$ the complementary set of roots. In general, $R_M$ is not a root system.

\begin{example}\label{ex1}
In the Lie algebras $A_3$, $\Sigma=\{\alpha_1,\alpha_2, \alpha_3\}$. If $\Theta=\{\alpha_2,\alpha_3\}$ then $R_M=\{\pm\alpha_1,\pm(\alpha_1+\alpha_2), \pm(\alpha_1+\alpha_2+\alpha_3)\}$ and it is not a root system.
\end{example}

Recall that $R$ is irreducible if and only if $R$ (or, equivalently, $\Sigma$) cannot be partitioned into two proper, orthogonal subsets (see \cite{H}). Equivalently, $R$ is irreducible iff the Dynkin diagram of $\Sigma$ is connected. From the classification of the connected Dynkin diagrams, if $\mf{g}$ is a simple Lie algebra then its root system $R$ is irreducible. 
\begin{lemma}\label{ponte}
Let $R$ be a irreducible root system with a simple root system $\Sigma$. Let $\Theta\subset \Sigma$ be a subset such that $\Sigma-\Theta=\Delta_1\cup\Delta_2$ is decomposed into two proper, orthogonal subsets, $\Delta_1$ and $ \Delta_2 $. Then there exists a root $\beta \in R_M$ of the form
$$
\beta=\alpha^1+\phi+\alpha^2
$$
where $\alpha^i\in \Delta_i$, $i=1,2$, and $\phi\in R_{\Theta}$.
\end{lemma}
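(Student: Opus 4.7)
The plan is to extract $\beta$ as a sum of simple roots along a shortest path in the Dynkin diagram of $\Sigma$ joining $\Delta_1$ to $\Delta_2$. Since $R$ is irreducible this diagram is connected, so such a path exists; I would fix $\alpha^1 \in \Delta_1$ and $\alpha^2 \in \Delta_2$ realizing a shortest path $\alpha^1 = \gamma_0 \sim \gamma_1 \sim \cdots \sim \gamma_m = \alpha^2$ in the diagram. Minimality forces every intermediate vertex $\gamma_i$ with $1 \le i \le m-1$ to lie in $\Theta$, since otherwise the path could be truncated at an interior vertex of $\Delta_1$ or $\Delta_2$; the orthogonality $\Delta_1 \perp \Delta_2$ forbids $\alpha^1 \sim \alpha^2$ in the diagram, so $m \ge 2$.

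Setting $\phi := \gamma_1 + \cdots + \gamma_{m-1}$ and $\beta := \alpha^1 + \phi + \alpha^2 = \gamma_0 + \cdots + \gamma_m$, the task reduces to proving that every partial sum along a simple path in the Dynkin diagram is a root. I would do this by induction on the length of the initial segment: assuming $\sigma_{\ell-1} := \gamma_0 + \cdots + \gamma_{\ell-1}$ is a root (which is automatic for $\ell = 1$), I invoke the fact that Dynkin diagrams of semisimple Lie algebras are trees to conclude that $\gamma_\ell$ cannot be adjacent to any $\gamma_j$ with $j \le \ell - 2$, since any such edge would create a cycle with the chosen path. Therefore $\langle \sigma_{\ell-1}, \gamma_\ell^\vee \rangle = \langle \gamma_{\ell-1}, \gamma_\ell^\vee \rangle < 0$, and the standard criterion that the sum of two roots with negative pairing is again a root yields $\sigma_\ell = \sigma_{\ell-1} + \gamma_\ell \in R$. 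Applied to the sub-path $\gamma_1, \ldots, \gamma_{m-1}$ inside $\Theta$ this gives $\phi \in R_\Theta$, and applied to the whole path it gives $\beta \in R$.

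Finally, $\beta \notin \langle \Theta \rangle$ because $\alpha^1 \in \Sigma \setminus \Theta$ appears with coefficient $1$ in $\beta$, hence $\beta \in R_M$. The main obstacle is the inductive step certifying that the partial sums remain roots; it rests on the tree structure of the Dynkin diagram, which rules out any \emph{chord} edges between non-consecutive vertices of the chosen path and so guarantees the negative-inner-product condition required by the root-string criterion. Everything else is essentially bookkeeping driven by the minimality of the path.
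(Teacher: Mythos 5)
Your proof is correct and follows essentially the same route as the paper's: both build $\beta$ as the sum of the simple roots along a path in the Dynkin diagram from $\Delta_1$ to $\Delta_2$ whose interior lies in $\Theta$, adding one simple root at a time via the negative-pairing criterion. Your version is in fact tighter, since the minimality of the path and the tree structure of the diagram cleanly justify two points the paper's proof glosses over (that the interior vertices lie in $\Theta$, and that each partial sum pairs negatively with the next vertex).
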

\begin{proof}
Since the Dynkin diagram of $\Sigma$ is connected, there exists $\alpha^i\in \Delta_i$, ($i=1, 2$) which $(\alpha^1,\phi_1)\neq 0$ and $(\alpha^2,\phi_k)\neq 0$ for some $\phi_1,\phi_k\in \Theta$, where $(\alpha^i,\phi_j)=B(H_{\alpha^i},H_{\phi_j})$. Note that $(\alpha^1,\phi_1)< 0$ and $(\alpha^2,\phi_k)< 0$ then $\alpha^1+\phi_1,\alpha^2+\phi_k\in R$, because $\alpha^1,\alpha^2,\phi_1$ and $\phi_k$ are simple roots (see \cite{H}, Lemma 9.4).

From the connectedness of the Dynkin diagram of $\Theta$ we can see that there exist $\phi_2,\phi_{k-1}\in \Theta$ which satisfy
$$
(\phi_1,\phi_2)<0\quad\text{and} \quad (\alpha^1,\phi_2)=0 \quad \text{then}\quad (\alpha^1+\phi_1,\phi_2)<0 \quad \text{and}\quad \alpha^1+\phi_1+\phi_2\in R
$$
$$
(\phi_{k-1},\phi_k)<0\quad\text{and} \quad (\phi_{k-1},\alpha^2)=0 \quad \text{then}\quad (\phi_{k-1}, \phi_k+\alpha^2)<0 \quad \text{and}\quad \phi_{k-1}+ \phi_k+\alpha^2\in R.
$$
Continuing with this process we obtain $\phi_3,\dots,\phi_{k-2}\in \Theta$ such that 
$$
(\alpha^1+\phi_1+\phi_2)+(\phi_3+\dots+\phi_{k-2})+(\phi_{k-1}+\phi_k+\alpha^2)\in R.
$$
Note that $\phi=\phi_1+\dots+\phi_k\in R_{\Theta}$ and $\alpha^1+\phi+\alpha^2\in R\setminus R_{\Theta}=R_M$.
\end{proof}

Next we discuss the flag manifolds as homogeneous spaces. Note that
\[
\p_{\Theta}:=\mf{h}\oplus\sum_{\alpha\in R^+}\mf{g}_{\alpha}\oplus\sum_{\alpha\in R_{\Theta}^-}\mf{g}_{\alpha}
\]
is a parabolic subalgebra, since it contains the Borel subalgebra
$\mf{b}^+=\mf{h}\oplus\sum\limits_{\alpha\in R^+}\mf{g}_{\alpha}$.

The partial flag manifold determined by the choice $\Theta\subset R$ is the
homogeneous space $\F=G/P_{\Theta}$, where $P_{\Theta}$ is the normalizer of
$\mf{p}_{\Theta}$ in $G$. In the special case $\Theta=\emptyset$, we obtain the \emph{full} (or maximal) flag manifold $\mathbb{F}=G/B$ associated with $R$, where
$B$ is the normalizer of the Borel subalgebra $\mf{b}^+=\mf{h}\oplus\sum\limits_{\alpha\in R^+}\mf{g}_{\alpha}$ in $G$.

Now we will see the construction of any flag manifold as the
quotient $U/K_{\Theta}$ of a semisimple compact Lie group
$U\subset G$ modulo the centralizer $K_{\Theta}$ of a torus in
$U$. We fix once and for all a Weyl base of $\mf{g}$ which amounts
to giving $X_\alpha\in\mf{g_{\alpha}}$, $H_{\alpha}\in\mf{h}$ with $\alpha\in R$, with the standard
properties:
\begin{equation}\label{base weyl}
\begin{tabular}
[c]{lll}
$B( X_{\alpha},X_{\beta}) =\left\{
\begin{array}
[c]{cc}%
1, & \alpha+\beta=0,\\
0, & \text{otherwise};
\end{array}
\right. $ &\hspace{-.5cm} & $\left[  X_{\alpha},X_{\beta}\right]  =\left\{
\begin{array}
[c]{cc}%
H_{\alpha}\in\mathfrak{h}, & \alpha+\beta=0,\\
n_{\alpha,\beta}X_{\alpha+\beta}, & \alpha+\beta\in R,\\
0, & \text{otherwise.}%
\end{array}
\right.  $
\end{tabular}
\end{equation}
The real constants $n_{\alpha,\beta}$ are non-zero if and only if
$\alpha+\beta\in R$. Besides that it satisfies
$$
\left\{
\begin{array}
[c]{cc}%
 n_{\alpha,\beta}=-n_{-\alpha,-\beta}=-n_{\beta,\alpha}& \\
\hspace*{-1cm} n_{\alpha,\beta}=n_{\beta,\gamma}=n_{\gamma,\alpha},&\mbox{if}\quad\alpha+\beta+\gamma=0.
\end{array}
\right.
$$

We consider the following two-dimensional real spaces
$\mf{u}_{\alpha}=\text{span}_\mathbb{R}\{A_{\alpha},iS_{\alpha}\}$, 
where $\hspace{0.3cm}A_{\alpha}=X_{\alpha}-X_{-\alpha}$
and $\hspace{0.3cm}S_{\alpha}=X_{\alpha}+X_{-\alpha}$, with
$\alpha\in R^{+}$.
Then the real Lie algebra
$\mf{u}=i\mf{h}_\mathbb{R}\oplus\sum\mf{u}_{\alpha}$, with $\alpha\in R^{+},$
is a compact real form of $\mf{g}$, where $\mf{h}_\mathbb{R}$ denotes the real space vector spanned by $\{H_\alpha; \alpha\in R\}$.

Let $U=\exp\mf{u}$ be the compact real form of $G$ corresponding
to $\mf{u}$. By the restriction of the action of $G$ on $\F$, we
can see that $U$ acts transitively on $\F$, then $\F=U/K_{\Theta}$,
where $K_{\Theta}=P_{\Theta}\cap U$. The Lie algebra $\mf{k}_{\Theta}$ of $K_\Theta$ is the set of fixed points of the conjugation $\tau\colon X_{\alpha}\mapsto-X_{-\alpha}$ of $\mf{g}$ restricted to $\p_{\Theta}$
$$
\mf{k}_{\Theta}=\mf{u}\cap\p_{\Theta}=i\mf{h}_\mathbb{R}\oplus\sum_{\alpha\in\Pt^+}\mf{u}_{\alpha}.
$$

The tangent space of $\F=U/K_{\Theta}$ at the origin $o=eK_{\Theta}$ can be identified with the orthogonal complement (with respect to the Killing form) of $\mf{k}_{\Theta}$ in $\mf{u}$
$$
T_o\F=\m=\sum\limits_{\alpha\in R_{M}^{+}}\mf{u}_{\alpha}
$$
with $R_{M}^{+}=R_{M}\cap R^+$. Thus we have $\mf{u}=\mf{k}_{\Theta}\oplus\mf{m}$.

It is known that there is a one-to-one correspondence between flag manifolds $\F$ of a compact semisimple Lie group (up to isomorphism) and painted Dynkin diagrams (up to equivalence) (cf. \cite{Alek e Perol} or \cite{Arv art2}). Next we present a briefly description of this correspondence. 

Two flag manifolds $\mathbb{F}=G/K$ and
$\mathbb{F}^{\prime}=G/K^{\prime}$ are \textit{equivalent} if
there exist an automorphism $\Phi\in Aut\left( G\right)$ such that
$\Phi\left(  K\right) =K^{\prime}$. This automorphism $\Phi$ induce a diffeomorphism $\ \widetilde{\Phi}\colon \mathbb{F}\longrightarrow
\mathbb{F}^{\prime}$ defined by $\ \widetilde{\Phi}\left(
gK\right) =\Phi\left( g\right) K^{\prime}$.

Let $\F$ be a flag manifold determined by $\Theta \subset \Sigma$.
Let $\Gamma=\Gamma(\Sigma)$ be the Dynkin diagram of the root system $R$. In $\Gamma$ we obtain the painted Dynkin diagram of $\F$ by painting the nodes $\Sigma - \Theta$ in black. Thus the simple roots $\Theta$ correspond to the subdiagram of white nodes.
\newpage
\begin{example}
The flag manifold $\F=\frac{SU(5)}{S(U(3)\times
U(2))}$ corresponds to the painted diagram
\begin{center}

\begin{multicols}{2}

\begin{tikzpicture}
    \draw (-1,0) node[anchor=east]  {};
    
    \node[dnode,label=below:$\alpha_1$] (1) at (0,0) {};
    \node[dnode,label=below:$\alpha_2$] (2) at (1,0) {};
    \node[dnode,label=below:$\alpha_3$, fill=black] (3) at (2,0) {};
    \node[dnode,label=below:$\alpha_4$] (4) at (3,0) {};

    \path (1) edge[sedge] (2)
          (2) edge[sedge] (3)
          (3) edge[sedge] (4)
          ;
\end{tikzpicture}
\end{multicols}
\end{center}

\end{example}
Note that maximal flag manifolds correspond to Dynkin diagrams with all roots painted in black.

Conversely, let $\Gamma$ be the Dynkin diagram of a semisimple Lie algebra $\mathfrak{g}$. Suppose that some nodes in $\Gamma$ are painted in black. Then 
\[
\mf{k}_{\Theta}= \mathfrak{u}(1)\oplus\cdots\oplus\mathfrak{u}(1)\oplus\mf{k}_{\Theta}^{\prime}
\]
where the semisimple part of $\mf{k}_{\Theta}$, denoted by $\mf{k}_{\Theta}^{\prime}$, is yielded by the set of white nodes together with the connected line between them, and each black nodes in $\Gamma$ yields a $\mathfrak{u}(1)$ component.

\begin{example}

The painted Dynkin diagram of the Lie algebra $\mathfrak{e_8}$ below
\begin{center}

\begin{multicols}{2}

\begin{tikzpicture}
    \draw (-1,0) node[anchor=east]  {};
    
    \node[dnode,] (1) at (0,0) {};
    \node[dnode,] (2) at (1,0) {};
    \node[dnode, fill=black] (3) at (2,0) {};
    \node[dnode,] (4) at (3,0) {};
    \node[dnode,] (5) at (4,0) {};
    \node[dnode,] (6) at (4,1) {};
    \node[dnode,] (7) at (5,0) {};
    \node[dnode,fill=black] (8) at (6,0) {};
    \path (1) edge[sedge] (2)
          (2) edge[sedge] (3)
          (3) edge[sedge] (4)
          (4) edge[sedge] (5)
          (5) edge[sedge] (6)
          (5) edge[sedge] (7)
          (7) edge[sedge] (8)
          ;
\end{tikzpicture}
\end{multicols}
\end{center}
corresponds to the flag manifold $\dfrac{E_8}{U(1)^2\times SU(3)\times SO(8)}$.
\end{example}

\section{Connectedness by triples zero sum}

In this section we define connectedness by triples zero sum for a general set of linear functional. In the case of root system we prove that this property is equivalenty to the connectedness of the Dynkin diagram.  We will show that the study of this notion on certain sets of linear functional is the key for a complete classification of $\mathcal{G}_1$ structures on $\F$. 

\begin{definition}
Let $\Gamma$ be a non empty set of linear functional. Two linear functinal $\alpha, \beta$ in $\Gamma$ such that $\alpha\neq \pm \beta$ are connected by triples zero sum (in $\Gamma$), if there exists a chain of triples $T_{i,j,k}^p=\{\gamma_i,\gamma_j,\gamma_k\}\subset \Gamma$ with $\gamma_i+\gamma_j+\gamma_k=0$ such that $\pm\alpha\in T_{i,j,k}^1$, $\pm\beta\in T_{i,j,k}^n$ and $T_{i,j,k}^p\cap T_{i^{\prime},j^{\prime},k^{\prime}}^{p+1}\neq \emptyset$ with $1\leq p \leq n-1$. We will say that $\Gamma$ is connected by triples zero sum (or connected by tzs) if any pair $\alpha, \beta \in \Gamma$ are connected by triples zero sum.
\end{definition}

If we adopt that each linear functional $\alpha \in \Gamma$ is connected by tzs with itself, then connectedness by tzs becomes a equivalence relation in set of linear functional $\Gamma$. 
\begin{example}
Consider the Lie algebras of type $A_5$. Its Dynkin diagram is given by
\begin{multicols}{2}
\begin{tikzpicture}
    \draw (-1,0) node[anchor=east]  {$A_5$};
    
    \node[dnode,label=below:$\alpha_1$] (1) at (0,0) {};
    \node[dnode,label=below:$\alpha_2$] (2) at (1,0) {};
    \node[dnode,label=below:$\alpha_3$] (3) at (2,0) {};
    \node[dnode,label=below:$\alpha_4$] (4) at (3,0) {};
    \node[dnode,label=below:$\alpha_5$] (5) at (4,0) {};

    \path (1) edge[sedge] (2)
          (2) edge[sedge] (3)
          (3) edge[sedge] (4)
          (4) edge[sedge] (5);
\end{tikzpicture}
\end{multicols}
If $\Gamma$ is the set of roots generated by $\alpha_1, \alpha_3$ and $\alpha_4$ then $\Gamma$ is not connected by tzs.
\end{example}

\begin{lemma}\label{tzs for root system}
All irreducible root system is connected by tzs.
\end{lemma}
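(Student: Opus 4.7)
The plan is to reduce the general case to two sub-problems: connecting any two simple roots, and connecting any positive root to some simple root. Since the tzs relation is defined using $\pm\alpha$, we may freely replace any root by its positive counterpart throughout.

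\emph{Simple roots are mutually connected.} If $\alpha_i,\alpha_j\in\Sigma$ are adjacent in the Dynkin diagram then $(\alpha_i,\alpha_j)<0$, so $\alpha_i+\alpha_j\in R$ by the same Humphreys lemma invoked in the proof of Lemma~\ref{ponte}. Hence $\{\alpha_i,\alpha_j,-(\alpha_i+\alpha_j)\}\subset R$ is a valid triple zero sum. Because $R$ is irreducible the Dynkin diagram is connected, so any two simple roots $\alpha,\beta\in\Sigma$ admit a path $\alpha=\beta_0,\beta_1,\ldots,\beta_k=\beta$ of pairwise adjacent simple roots; the chain of triples $T_l=\{\beta_l,\beta_{l+1},-(\beta_l+\beta_{l+1})\}$ for $l=0,\ldots,k-1$, with $\beta_{l+1}\in T_l\cap T_{l+1}$, connects $\alpha$ to $\beta$ by tzs.

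\emph{Every positive root is connected to a simple root.} I proceed by induction on the height $\mathrm{ht}(\gamma)=\sum_ic_i$ of $\gamma=\sum_ic_i\alpha_i\in R^+$. The base case $\mathrm{ht}(\gamma)=1$ is trivial. For $\gamma\in R^+\setminus\Sigma$, the identity $(\gamma,\gamma)=\sum_ic_i(\gamma,\alpha_i)$ together with $(\gamma,\gamma)>0$ and $c_i\geq 0$ forces some $(\gamma,\alpha_i)>0$, whence $\gamma-\alpha_i\in R$ by the standard root-string argument. Moreover $\gamma-\alpha_i\in R^+$: otherwise $\alpha_i=\gamma+(\alpha_i-\gamma)$ would express the simple root $\alpha_i$ as a sum of two positive roots, which is impossible. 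Hence $\{\gamma,-\alpha_i,\alpha_i-\gamma\}\subset R$ is a triple zero sum connecting $\gamma$ to the simple root $\alpha_i$ in one step. Combining this with the first step, for any $\alpha,\beta\in R$ with $\alpha\neq\pm\beta$ we reduce each (via its positive part) to a simple root and then connect the two simple roots, finishing the proof.

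The key technical input is the existence of a simple root $\alpha_i$ with $\gamma-\alpha_i\in R^+$; this is a standard but nontrivial fact about root systems and is the main obstacle to streamline. Everything else is a straightforward concatenation of triples, with irreducibility entering only through connectedness of the Dynkin diagram in the first step.
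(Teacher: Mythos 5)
Your proof is correct and follows essentially the same strategy as the paper's: connect simple roots pairwise through adjacent triples $\{\alpha_i,\alpha_j,-(\alpha_i+\alpha_j)\}$ along the connected Dynkin diagram, then attach every non-simple root to a simple one via a single triple. The only difference is that you justify the step the paper dismisses as ``easy to see'' (the existence of a simple $\alpha_i$ with $\gamma-\alpha_i\in R$) by the standard positivity argument $(\gamma,\gamma)=\sum_i c_i(\gamma,\alpha_i)>0$, which is a welcome addition; the induction on height you announce is not actually needed, since that triple connects $\gamma$ to $\alpha_i$ in one step.
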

\begin{proof}
Let $R$ be a irreducible root system with basis $\Sigma=\{\alpha_1,\dots,\alpha_n\}$ and consider the correspondent set $R^+$ of positive roots. Since $R$ is irreducible, $(\alpha_1,\alpha_j)\neq 0$ for some $2\leq j\leq n$. For simplicity, let $(\alpha_1,\alpha_2)\neq 0$. The roots $\alpha_1,\alpha_2$ are simple then $(\alpha_1,\alpha_2)< 0$ and $\alpha_1+\alpha_2\in R$ ( see \cite{H}, Lemma 9.4). Then $\alpha_1$ and $\alpha_2$ are connected by the triple zero sum $\{\alpha_1,\alpha_2,-(\alpha_1+\alpha_2)\}$. Applying the same idea to $\alpha_2$, we conclude that $\alpha_2$ and $\alpha_3$ are connected by the tzs $\{\alpha_2,\alpha_3,-(\alpha_2+\alpha_3)\}$. (If $(\alpha_2,\alpha_j)= 0$ for $3\leq j\leq n$ then $(\alpha_1,\alpha_j)\neq 0$ for some $3\leq j\leq n$.) Thus $\alpha_1$ and $\alpha_3$ are connected by the triple zero sum. Continuing with this process we obtain that any pair $\alpha_i, \alpha_j$ are connected by the tzs. 

Now let $\beta\in R$ not simple. It is easy to see that $\beta\pm\alpha_i\in R$ for some simple root $\alpha_i$. Then $\beta$ and $\alpha_i$ are connected by tzs. Thus every root in $R$ is connected by triple zero sum to some simple root and any pair of simple roots are connected by tzs, so any pair of roots $\beta, \gamma \in R$ are connected by tzs.
\end{proof}

If the Dynkin diagram of $R$ is not connected then $R$ cannot be connected by tzs. Since a root system $R$ is irreducible if and only if its Dynkin diagram is connected, we obtain: 

\begin{theorem}
The Dynkin diagram of $R$ is connected iff $R$ is connected by tzs. In particular, the Dynkin diagram of semisimple Lie algebras are connected by tzs.
\end{theorem}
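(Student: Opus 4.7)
The plan is to establish the biconditional directly; the forward direction is almost a restatement of the preceding Lemma \ref{tzs for root system} since ``Dynkin diagram of $R$ connected'' is equivalent to ``$R$ irreducible.'' So if $\Gamma(\Sigma)$ is connected, $R$ is irreducible, and Lemma \ref{tzs for root system} immediately gives that $R$ is connected by tzs.

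For the converse, I would argue the contrapositive: if $\Gamma(\Sigma)$ is disconnected, then $R$ is not connected by tzs. Suppose $\Sigma$ splits into two orthogonal subsets $\Sigma=\Sigma_1\sqcup\Sigma_2$ with $(\Sigma_1,\Sigma_2)=0$. Setting $R_\ell:=\langle\Sigma_\ell\rangle\cap R$, one first observes that $R=R_1\sqcup R_2$: indeed, any $\alpha\in R$ is (up to sign) a nonnegative integer combination of simple roots, and since the two subsets span orthogonal subspaces, the integral combination cannot mix $\Sigma_1$ and $\Sigma_2$ simple roots and still be a root (the standard Cartan criterion rules this out, or one notes that whenever $\alpha\in R$ has support meeting both $\Sigma_1$ and $\Sigma_2$ then $(\alpha,\alpha)$ decomposes as a sum over the two components, but the cross terms from any reflection chain force one component of the support to vanish).

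The crux is the following claim: any triple $\{\gamma_i,\gamma_j,\gamma_k\}\subset R$ with $\gamma_i+\gamma_j+\gamma_k=0$ lies entirely in $R_1$ or entirely in $R_2$. Using the decomposition $\mf{h}_\RR^*=\mathrm{span}(\Sigma_1)\oplus\mathrm{span}(\Sigma_2)$, project the zero sum to each summand; since each root belongs to exactly one summand, the projections just select the components sitting in $R_1$ or $R_2$, and their sum must be zero in each summand separately. If some but not all roots of the triple lie in $R_1$, then either one or two of them do. In the first case, that single root equals zero after projection — impossible. In the second case, the sum of the two $R_1$-roots would equal the negative of the third ($R_2$-)root, forcing both sides into $\mathrm{span}(\Sigma_1)\cap\mathrm{span}(\Sigma_2)=\{0\}$, again impossible.

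Consequently every link in any chain of tzs triples stays inside a single $R_\ell$, so no root of $R_1$ is tzs-connected to a root of $R_2$, and $R$ fails to be connected by tzs. The ``in particular'' statement then follows from the fact that a simple Lie algebra has irreducible, hence connected, Dynkin diagram. I do not expect a serious obstacle: once the decomposition $R=R_1\sqcup R_2$ is in place, the triple-projection argument is purely linear, and the only mild subtlety is checking that roots respect the orthogonal splitting of $\Sigma$, which is immediate from the definition of $\langle\Sigma_\ell\rangle\cap R$.
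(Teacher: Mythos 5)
Your proposal is correct and follows the same route as the paper: the forward implication is exactly Lemma \ref{tzs for root system} (irreducible $\Rightarrow$ connected by tzs), invoked the same way, and the ``in particular'' clause is handled identically. For the converse the paper merely asserts, in the sentence preceding the theorem, that a disconnected Dynkin diagram forces $R$ not to be connected by tzs; your contrapositive argument --- decompose $R=R_1\sqcup R_2$ along the orthogonal splitting of $\Sigma$, then project any zero-sum triple onto $\mathrm{span}(\Sigma_1)$ and $\mathrm{span}(\Sigma_2)$ to see it must lie wholly in one $R_\ell$, so no tzs chain can cross between components --- is a correct and complete justification of that step, and is in fact more detailed than what the paper provides.
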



\section{t-roots}
In order, to describe invariant tensors and to set the notation, we present the fundamental theory on the isotropy representation for flag manifolds (see, for example, \cite{Alek e Perol} or \cite{Sie}).

It is known that $\F$ is a reductive homogeneous space, this means that the adjoint representation of $\mf{k}_{\Theta}$ (and $K_{\Theta}$) leaves $\mf{m}$ invariant, i.e. $\ad(\mf{k}_{\Theta})\mf{m}\subset\mf{m}$. Thus we can decompose $\mf{m}$ into a sum of irreducible $\ad (\mf{k}_{\Theta})$ submodules $\mf{m}_i$ of the module $\mf{m}$:
\[
\m= \m_1\oplus\cdots\oplus\m_s.
\]

Now we will see how to obtain each irreducible $\ad (\mf{k}_{\Theta})$ submodules $\m_i$. By complexifying the Lie algebra of $K_{\Theta}$ we obtain
$$
\mf{k}_{\Theta}^{\mathbb{C}}=\mf{h}\oplus\sum_{\alpha\in\Pt}\mf{g}_{\alpha}.
$$
The adjoint representation $\ad(\mf{k}_{\Theta}^{\mathbb{C}})$ of $\mf{k}_{\Theta}^{\mathbb{C}}$ leaves the complex tangent space $\m^{\mathbb{C}}$ invariant.
Let
$$
\mf{t}:=Z(\mf{k}_{\Theta}^\mathbb{C})\cap i\mf{h}_\mathbb{R}
$$
the real form of the center $Z(\mf{k}_{\Theta}^\mathbb{C})$. It is easy to see that $\mathfrak{t}$ is a subalgebra of $i\mf{h}_\mathbb{R}$ orthogonal (with respect to the Killing form on $i\mf{h}_\mathbb{R}$) to $H_{\alpha}$, for all $\alpha$ in $\Pt$, i.e., 
$$
\mf{t}=\{H\in i\mf{h}_{\RR}:\alpha(H)=0,\,\text{for all} \,\alpha \in \Pt\}.
$$

Let $i\mf{h}_\RR^{\ast}$ and $\mf{t}^{\ast}$ be the dual vector
space of $i\mf{h}_\RR$ and $\mf{t}$, respectively, and consider the
map $k\colon i\mf{h}_\RR^{\ast}\longrightarrow\mf{t}^{\ast}$ given
by $k(\alpha)=\alpha|_{\mf{t}}$. The linear functional of 
$R_t:=k(R_{M})$ are called \emph{t-roots}. Denote by $R_{\mf{t}}^+=k(R_M^+)$ the set of positive t-roots. Note that the map $k$ is not a 1-1 correspondence in general.

A basis of real space $\mf{t}^{\ast}$ can be obtained as following: we fix a basis $\Sigma=\{\alpha_1, \dots \alpha_k,\beta_1, \dots, \beta_r \}$ of $R$ associated to $\F$ where $\Theta=\{\alpha_1, \dots \alpha_k\}$ is a basis of $R_\Theta$ and $\Sigma_M=\Sigma\setminus\Theta=\{\beta_1, \dots, \beta_r \}$ . Let $\bar{\beta_1},\dots, \bar{\beta_r}$ be the fundamental weights corresponding to the simple roots of $\Sigma_M$, defined by
$$
\left< \bar{\beta_i},\beta_j\right>\colon =\dfrac{2(\bar{\beta_i},\beta_j)}{(\beta_j, \beta_j)}=\delta_{ij},\quad \left<\bar{\beta_i}, \alpha_j\right>=0,
$$
where $(\alpha, \beta)=B(H_\alpha, H_\beta)$. Then $\{\bar{\beta_1},\dots, \bar{\beta_r}\}$ is a basis of $\mf{t}^{\ast}$. Indeed, let $\lambda$ in $\mf{t}^{\ast}$ be a t-root, since the Killing form $(\cdot,\cdot)$ on $i\mf{h}_\mathbb{R}$ is non degenerated and negative definite, each t-root $\lambda$ determines $H_\lambda\in\mf{t}\subset i\mf{h}_\mathbb{R}$, such that $\beta(H_\lambda)=(\lambda,\beta)$, for each root $\beta$ in $R$, by the Riez representation. Then $0=\alpha_i(H_{\lambda})=( \lambda,\alpha_i)$, for $i=1,\dots,k$.
Now let $m_j=\left< \lambda,\beta_j\right>$ then $0=\left< \lambda-\sum m_j\bar{\beta_j},\gamma\right>$, for each simple root $\gamma \in \Sigma$, which implies that $(\lambda-\sum m_j\bar{\beta_j},\gamma)=0$ as well,  or that $\lambda=\sum m_j\bar{\beta_j}$.

According to \cite{Sie}, there exists a 1-1 correspondence between positive t-roots and irreducible submodules of the adjoint representation of $\mf{k}_{\Theta}$. This correspondence is given by
\begin{equation*}
\xi\longleftrightarrow\mf{m}_{\xi}=\sum_{k(\alpha)=\xi}\mf{u}_{\alpha}
\end{equation*}
with $\xi\in R_{\mf{t}}^+$. Besides these submodules are inequivalents. Hence the tangent space can be decomposed as follows
\begin{equation*}\label{decomp}
\m=\m_{\xi_1}\oplus\cdots\oplus\m_{\xi_s}
\end{equation*}
where $R_t^+=\lbrace \xi_1, \ldots, \xi_s\rbrace$.

Following the idea of section 3, it is natural to ask if $R_t$ is connected by tzs. It is easy to see that if $R_M$ is connected by tzs then $R_t$ is connected by tzs. But the reciprocal is not true, as we can see in example \ref{ex1}, where $\Rt=\{k(\alpha_1)\}$ and $R_M=\{\pm\alpha_1,\pm(\alpha_1+\alpha_2), \pm(\alpha_1+\alpha_2+\alpha_3)\}$. The next result shows that $R_t$ inherits the connectivity property by tzs from the root system $R$.
%
%
%
%
%
%
%

\begin{theorem} \label{connected tzs t-roots}
The set of t-roots $R_t$ is connected by tzs.
\end{theorem}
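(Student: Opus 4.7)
The plan is a two-stage proof: first, show that the t-roots $k(\beta_j)$ coming from the $\Sigma_M$-simple roots $\beta_j \in \Sigma_M$ are all pairwise tzs-connected, and second, show by induction that every other t-root is tzs-connected to one of these. Throughout, the key mechanism for manufacturing tzs triples in $R_t$ is that any identity $\alpha_1 + \alpha_2 + \alpha_3 = 0$ in $R$ with all three summands in $R_M$ descends under $k$ to a tzs triple in $R_t$, and more generally any equality $k(\alpha_1) + k(\alpha_2) + k(\alpha_3) = 0$ with $\alpha_s \in R_M$ supplies such a triple.

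For the first stage, given $\beta_i, \beta_j \in \Sigma_M$, I would take a path in the (connected) Dynkin diagram of $\Sigma$ from $\beta_i$ to $\beta_j$ and chain through its intermediate $\Sigma_M$-nodes, thereby reducing to the case where the only $\Sigma_M$-nodes on the segment are the endpoints. If $\beta_i$ and $\beta_j$ are then adjacent in the diagram, $\beta_i + \beta_j \in R_M^+$ and $\{k(\beta_i), k(\beta_j), -k(\beta_i + \beta_j)\}$ is the desired tzs triple. If they are separated by a chain of $\Theta$-nodes, the induced sub-irreducible-root-system has $\Sigma' - \Theta' = \{\beta_i\} \cup \{\beta_j\}$ partitioned into two orthogonal singletons, and Lemma \ref{ponte} produces a root $\beta = \beta_i + \phi + \beta_j \in R_M^+$ with $\phi \in R_\Theta^+$; applying $k$ (which kills $\phi$) yields the tzs triple $\{k(\beta_i), k(\beta_j), -k(\beta)\}$.

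For the second stage, let $\xi = k(\gamma) \in R_t^+$ with $\gamma \in R_M^+$, and define $|\gamma|_M$ to be the sum of the coefficients of $\gamma$ on $\Sigma_M$. I would induct on $|\gamma|_M$. The base case $|\gamma|_M = 1$ forces $k(\gamma) = k(\beta_j)$ for some $j$, which already lies in the target class. For the inductive step, invoke the standard ordered decomposition $\gamma = \alpha_{i_1} + \cdots + \alpha_{i_h}$ into simple roots with every partial sum a positive root; since $|\gamma|_M \geq 2$ there are at least two $\Sigma_M$-occurrences among the $\alpha_{i_s}$, and splitting at the first $\Sigma_M$-occurrence produces $\gamma = \gamma_1 + \gamma_2$ with both $\gamma_1, \gamma_2 \in R_M^+$ and each of strictly smaller $\Sigma_M$-height. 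Then $\{k(\gamma), -k(\gamma_1), -k(\gamma_2)\}$ is a tzs triple, and the inductive hypothesis combined with the first stage provides a tzs chain from $k(\gamma)$ to any $k(\beta_l)$.

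The main obstacle I expect is exactly this decomposition step: one needs both summands of $\gamma$ to belong to $R_M^+$, not merely to $R^+$, because subtracting a single simple root in the usual way can land in $R_\Theta$ and fail to reduce $\Sigma_M$-height. Splitting the ordered simple-root sum at a $\Sigma_M$-position rather than at an arbitrary position is what guarantees that both halves are positive roots with positive $\Sigma_M$-content, making the induction run cleanly and, together with the first stage, establishing the theorem.
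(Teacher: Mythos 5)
Your Stage 1 is sound and is essentially the paper's own mechanism: the paper also invokes Lemma \ref{ponte} to bridge two $\Sigma_M$-nodes separated by a chain of $\Theta$-nodes (its induction runs over the number of connected components of the Dynkin diagram of $\Sigma-\Theta$ rather than over pairs of nodes, but the bridging root $\beta=\alpha^1+\phi+\alpha^2$ with $k(\beta)=k(\alpha^1)+k(\alpha^2)$ is the same device). The genuine gap is in Stage 2, at exactly the step you flag as the crux. In an ordered decomposition $\gamma=\alpha_{i_1}+\cdots+\alpha_{i_h}$ only the \emph{initial} partial sums are guaranteed to be roots; the complementary tail $\alpha_{i_{s+1}}+\cdots+\alpha_{i_h}$ need not be a root, and placing the cut at a $\Sigma_M$-position does nothing to repair this. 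Concretely, in $G_2$ with $\alpha_1$ short and $\Theta=\{\alpha_2\}$, the root $\gamma=3\alpha_1+2\alpha_2$ admits the valid ordered decomposition $\alpha_1,\alpha_2,\alpha_1,\alpha_1,\alpha_2$ (partial sums $\alpha_1$, $\alpha_1+\alpha_2$, $2\alpha_1+\alpha_2$, $3\alpha_1+\alpha_2$, $\gamma$); the first $\Sigma_M$-occurrence is the initial $\alpha_1$, and the tail is $2\alpha_1+2\alpha_2=2(\alpha_1+\alpha_2)$, which is not a root. So $\gamma_2\notin R$, the claimed triple $\{k(\gamma),-k(\gamma_1),-k(\gamma_2)\}$ need not consist of t-roots, and the induction on $|\gamma|_M$ does not close as written.

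The step is repairable by running a different induction. For a non-simple $\gamma\in R_M^+$ write $\gamma=\gamma'+\alpha$ with $\alpha$ simple and $\gamma'\in R^+$ (this much is standard) and induct on the ordinary height of $\gamma$. If $\alpha\in\Theta$, then $\gamma'\in R_M^+$ and $k(\gamma')=k(\gamma)$, so you may replace $\gamma$ by $\gamma'$ without changing the t-root. If $\alpha\in\Sigma_M$ and $\gamma'\in R_\Theta$, then $k(\gamma)=k(\alpha)$ and you are already in the class handled by Stage 1. If $\alpha\in\Sigma_M$ and $\gamma'\in R_M^+$, then $\{k(\gamma),-k(\alpha),-k(\gamma')\}$ is a genuine zero-sum triple in $R_t$ and connects $k(\gamma)$ directly to $k(\alpha)\in k(\Sigma_M)$. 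This never requires both halves of a two-root splitting to lie in $R_M$. (For comparison, the paper's proof is terser: it treats the case of connected $\Sigma-\Theta$ by appeal to the argument of Lemma \ref{tzs for root system} and then glues components with Lemma \ref{ponte}; your two-stage organization is finer-grained, but the decomposition claim you rely on is the one piece that must be argued differently.)
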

\begin{proof}
Consider the Dynkin diagram of $\Sigma$. Let $N$ be the amount of connected components of the Dynkin diagram of $\Sigma-\Theta$. We will prove by mathematical induction on $N$. If $N=1$ then the diagram of $\Sigma-\Theta$ is connected, following the idea of Lemma \ref{tzs for root system} proof, we can see that  if $\alpha,\beta\in \Sigma-\Theta$ then $\alpha$ and $\beta$ are connected by tzs. Besides $\gamma\in R_M$ is not a simple root then $\gamma$ is connected by tzs to some $\alpha\in\Sigma-\Theta$. Thus any pair of roots in $R_M$ is connected by tzs. Then $R_t$ is connected by tzs.

Now suppose that $N=k+1$ then $\Sigma-\Theta=\Delta_1\cup\cdots\cup\Delta_k\cup\Delta_{k+1}$ where each connected component of the Dynkin diagram of $\Sigma-\Theta$ corresponds to the Dynkin diagram $\Delta_i$, for some $i=1, \dots,k+1$. From Lemma \ref{ponte}, there exist a complementary root $\beta\in R_M$ such that $\beta=\alpha^1+\phi+\alpha^2$, with $\alpha^1\in\Delta_k$, $\alpha^2\in\Delta_{k+1}$ and $\phi\in R_{\Theta}$. Then the correspondent t-root is $k(\beta)=k(\alpha^1)+k(\alpha^2)$ and 
$\left\langle  \bar{\Delta}_k \right\rangle $  and $\left\langle  \bar{\Delta}_{k+1} \right\rangle $ are connected by tzs, where $\left\langle  \bar{\Delta}_i \right\rangle $ denotes the t-roots generated by $k(\Delta_i)$. By induction hypothesis, 
$$
\{\left\langle  \bar{\Delta}_{1} \right\rangle\oplus\cdots\oplus\left\langle  \bar{\Delta}_{k} \right\rangle\}\cap\Rt 
$$
is connected by tzs. So 
$$
\Rt=\{\left\langle  \bar{\Delta}_{1} \right\rangle\oplus\cdots\oplus\left\langle  \bar{\Delta}_{k} \right\rangle\oplus\left\langle \bar{\Delta}_{k+1}\right\rangle \}\cap\Rt 
$$
is connected by tzs. 
\end{proof}

\section{Invariant Almost Hermitian Structures on $\F$}

An \emph{almost complex structure} on $\F=U/K_{\Theta}$ is a tensor field of type $(1,1)$ that corresponds each $x\in \F$ to a linear endomorphism  $J_{x}\colon T_{x}\F\rightarrow T_{x}\F$ which satisfies $J_{x}^{2}=-Id$. The almost complex structure on $\F$ is invariant (or $U$-invariant) if 
\[
d \mathit{u}_{x}\circ J_{x}=J_{\mathit{u}x}\circ d\mathit{u}_{x}
\]
for all $\mathit{u}\in U$. An invariant almost complex structure (\textit{iacs} from now) is determined by a linear endomorphism $J\colon\mathfrak{m\longrightarrow}\mathfrak{m}$, which satisfies $J^{2}=-Id$ and commutes with the adjoint action of $K_\Theta$ on $\mathfrak{u}$, that is,
\[
Ad(k) J=J Ad(k), \quad \text{for all},\quad  k\in K_\Theta,
\]
or, equivalently
\[
ad(L)J=Jad(L), \quad \text{ for all}, \quad L\in \mathfrak{k}_\Theta
\]
(cf. \cite{Kob}).

As it is common in the literature, we will use the same letter $J$ to denote its extension to the complexi-\\fication $\mathfrak{m}^{\mathbb{C}}$. Since $J^{2}=-Id$, its eigenvalues are $i$ and $-i$ and the correspondents eigenspaces are denoted by
\[
T^{(1,0)}_{o}\F=\left\{X\in T_{o}\mathbb{F}^{\mathbb{C}}:
JX=iX\right\}
\quad \text{and} \quad
T^{(0,1)}_{o}\F=\left\{X\in T_{o}\mathbb{F}^{\mathbb{C}}:
JX=-iX\right\}.
\]

Thus
\[
\mathfrak{m}^{\mathbb{C}}=T_{o}\F^{\mathbb{C}}=T^{(1,0)}_{o}\F\oplus
T^{(0,1)}_{o}\F.
\]
The eigenvectors with eigenvalue $+i$ (resp. $-i$) are called  \textit{of type (1,0)} (resp. \textit{of type (0,1)} ). 

The next result is known for maximal flag manifolds (see \cite{SM N}). A similar result for generalized flag manifolds can be found in \cite{SM R}. Our work here is to classify iacs in terms of t-roots. 

\begin{proposition}\label{iacs}
Let $\F$ be a generalized flag manifold and $R_t$ the correspondent set of t-roots. Then any iacs $J$ on $\F$ is determined by a set of signals $\{\varepsilon_{\delta},\delta\in R_t\}$, where $\varepsilon_{\delta}=\pm 1$, satisfying $\varepsilon_{\delta}=-\varepsilon_{-\delta}$ for all
$\delta\in R_t$. In particular, there exist $2^{|R_t^{+}|}$ iacs on $\F$ and each iacs $J$ is determined by exactly $|R_t^{+}|$ signals. 
\end{proposition}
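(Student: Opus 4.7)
The plan is to reduce to Schur's lemma after complexification. First I would pass to the complexified tangent space $\mathfrak{m}^{\mathbb{C}}$ and recall from the Siebenthal correspondence (used earlier in the t-roots section) that the decomposition into complex irreducible $\mathrm{ad}(\mf{k}_{\Theta}^{\mathbb{C}})$-submodules is indexed by the full set $R_t$:
\[
\mathfrak{m}^{\mathbb{C}} = \bigoplus_{\xi \in R_t} \mathfrak{n}_{\xi}, \qquad \mathfrak{n}_{\xi} = \sum_{k(\alpha)=\xi} \mathfrak{g}_{\alpha},
\]
and that these summands are pairwise inequivalent. The iacs $J$ extends $\mathbb{C}$-linearly to $\mathfrak{m}^{\mathbb{C}}$ and, by the defining commutation $\mathrm{ad}(L)J = J\,\mathrm{ad}(L)$, intertwines this action.

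Since $\mf{n}_{\xi}$ and $\mf{n}_{\eta}$ are irreducible and non-isomorphic for $\xi\neq \eta$, Schur's lemma forces $J$ to leave each $\mf{n}_{\xi}$ invariant and act on it by a scalar $\lambda_{\xi}\in \mathbb{C}$. The identity $J^2=-\mathrm{Id}$ yields $\lambda_{\xi}^2=-1$, hence $\lambda_{\xi}=i\varepsilon_{\xi}$ with $\varepsilon_{\xi}\in\{\pm 1\}$. The next step is to impose that $J$ is the complexification of a real endomorphism of $\mf{m}$, i.e.\ that $J$ commutes with the conjugation $\tau$ of $\mf{g}$ with respect to $\mf{u}$. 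Because $\tau(\mf{g}_{\alpha})=\mf{g}_{-\alpha}$, one has $\tau(\mf{n}_{\xi})=\mf{n}_{-\xi}$, so if $X\in\mf{n}_{\xi}$ then
\[
J(\tau X)=\tau(JX)=\tau(i\varepsilon_{\xi}X)=-i\varepsilon_{\xi}\,\tau X,
\]
which forces $\varepsilon_{-\xi}=-\varepsilon_{\xi}$.

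Conversely, I would check that any family $\{\varepsilon_{\delta}\}_{\delta\in R_t}$ with $\varepsilon_{\delta}=\pm 1$ and $\varepsilon_{-\delta}=-\varepsilon_{\delta}$ defines, via $J|_{\mf{n}_{\xi}}=i\varepsilon_{\xi}\,\mathrm{Id}$, a $\mathbb{C}$-linear map on $\mf{m}^{\mathbb{C}}$ which (i) satisfies $J^2=-\mathrm{Id}$ by construction, (ii) commutes with $\mathrm{ad}(\mf{k}_{\Theta}^{\mathbb{C}})$ since each $\mf{n}_{\xi}$ is $\mathrm{ad}(\mf{k}_{\Theta}^{\mathbb{C}})$-invariant and $J$ is scalar on it, and (iii) commutes with $\tau$ by the sign condition, hence restricts to a real endomorphism $J\colon \mf{m}\to\mf{m}$ that defines an iacs. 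The counting is then immediate: the relation $\varepsilon_{-\delta}=-\varepsilon_{\delta}$ means each of the $|R_t^+|$ positive t-roots contributes one independent binary choice, yielding exactly $2^{|R_t^+|}$ iacs.

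The only step requiring care is the non-equivalence of the complex submodules $\mf{n}_{\xi}$ needed for Schur: I would cite the Siebenthal description already invoked above, where this inequivalence is exactly the content of the bijection between positive t-roots and irreducible isotropy summands. With that in hand, everything else is bookkeeping between the decomposition over $\mathbb{R}$ (indexed by $R_t^+$) and the decomposition over $\mathbb{C}$ (indexed by $R_t$) together with the reality constraint imposed by $\tau$.
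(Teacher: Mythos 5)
Your argument is correct and follows essentially the same route as the paper: complexify, invoke the Siebenthal decomposition of $\mathfrak{m}^{\mathbb{C}}$ into pairwise inequivalent irreducible $\operatorname{ad}(\mathfrak{k}_{\Theta}^{\mathbb{C}})$-submodules indexed by $R_t$, and apply Schur's lemma to get $J=i\varepsilon_{\delta}\,\mathrm{Id}$ on each summand. The only cosmetic difference is that you derive $\varepsilon_{-\delta}=-\varepsilon_{\delta}$ from the antilinearity of the conjugation $\tau$ fixing $\mathfrak{u}$, whereas the paper obtains the same relation by an explicit computation of $J$ on the real basis $\{A_{\alpha}, iS_{\alpha}\}$; both are valid, and your explicit verification of the converse (that every admissible sign family yields an iacs) is a welcome addition that the paper leaves implicit in its counting claim.
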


\begin{proof}
Let $J$ be an iacs on $\F$ and consider its complexification on $\mathfrak{m}^{\mathbb{C}}$. By the invariance of $J$, if $H\in\mathfrak{h}^\mathbb{C}$ it follows $ad(H)JX_{\alpha}=\alpha(H)JX_{\alpha}$,
then $J(\mathfrak{g}_{\alpha}^{\mathbb{C}})=\mathfrak{g}_{\alpha}^{\mathbb{C}}$ with $\alpha\in R$. Since the eigenvalues of $J$ are $\pm i$ and the eigenvectors in $\mathfrak{m}^{\mathbb{C}}$ are $X_{\alpha}$,
$\alpha\in R$, we can write
$JX_{\alpha}=i\varepsilon_{\alpha}X_{\alpha}$, with
$\varepsilon_{\alpha}=\pm 1$.

Computing $J$ on the basis $\{A_\alpha, iS_\alpha\}$ of the real vector space $\mathfrak{m}$ and using that
$X_{-\alpha}=\frac{1}{2}\left(-i(iS_{\alpha})-A_{\alpha}\right)$ we obtain

\[
i\varepsilon_{-\alpha}\frac{1}{2}\left(-i(iS_{\alpha})-A_{\alpha}\right)=i\varepsilon_{-\alpha}X_{-\alpha}
=
\frac{1}{2}\left(-iJ(iS_{\alpha})-JA_{\alpha}\right).
\]
Then
\[
J(iS_{\alpha})=\varepsilon_{-\alpha}A_{\alpha} \quad\text{and}\quad JA_{\alpha}=-\varepsilon_{-\alpha}(iS_{\alpha}),
\]
thus, using $A_{\alpha}=-A_{-\alpha}$ and $iS_{\alpha}=iS_{-\alpha}$ it follows
\[
-\varepsilon_{-\alpha}(iS_{\alpha})=JA_{\alpha}=-JA_{-\alpha}=\varepsilon_{\alpha}(iS_{-\alpha})=\varepsilon_{\alpha}(iS_{\alpha})
\]
so $-\varepsilon_{-\alpha}=\varepsilon_{\alpha}$ for each $\alpha$ in $R$.

Now consider the decomposition of the complex tangent space
\begin{equation*}
\mathfrak{m}^{\mathbb{C}}=\mathfrak{m}^{\mathbb{C}}_{1}\oplus\mathfrak{m}^{\mathbb{C}}_{2}\oplus\cdots\oplus\mathfrak{m}^{\mathbb{C}}
_{2s}
\end{equation*}
in irreducible and inequivalent $\mathfrak{k}_\Theta^{\mathbb{C}}$-submodules determined by the t-roots $\delta_{1},-\delta_{1},\dots,-\delta_{s},\delta_{s}$, where
\[
\mathfrak{m}^{\mathbb{C}}_{j}=\mathfrak{m}^{\mathbb{C}}_{\delta_{j}}=\sum_{\alpha\in R,
k(\alpha)=\delta_{j}}\mathbb{C}X_{\alpha}.
\]
Since $J$ comutes with $ad(L)$, for all $L\in\mathfrak{k}_\Theta^{\mathbb{C}}$, then 
we see that
$J(\mathfrak{m}^{\mathbb{C}}_{\delta_{j}})=\mathfrak{m}^{\mathbb{C}}_{\delta_{j}}$,
for all $\delta_{j}\in R_t$. Then by Schur Lemma
\[
J =
i\varepsilon_{1}Id|_{\mathfrak{m}^{\mathbb{C}}_{1}}\oplus\cdots\oplus
i\varepsilon_{s}Id|_{\mathfrak{m}^{\mathbb{C}}_{s}}
\]
this means that, if $\alpha,\beta\in R$ are such that
$k(\alpha)=k(\beta)=\delta\in R_t$ then
$\varepsilon_{\alpha}=\varepsilon_{\beta}=\varepsilon_{\delta}$.

Finally, given a t-root $\delta$ we obtain
\[
\varepsilon_{\delta}=\varepsilon_{\alpha}=-\varepsilon_{-\alpha}=-\varepsilon_{-\delta}
\]
where $\alpha$ is any root in $R$ such that $k(\alpha)=\delta$.

\end{proof}

An iacs $J$ on $\F$ is called an \textit{invariant complex structure} (or \textit{integrable complex struture}) if its 
Nijenhuis tensor, defined by
\[
-\frac{1}{2}N(X,Y)=-\left[JX,JY\right]+\left[X,Y\right]+J\left[X,JY\right]+J\left[JX,Y\right], \quad X,Y\in\mathfrak{m}.
\]
is zero, (cf. \cite{Kob}). In the next result we obtain the necessary and suficient condition of integrability of $J$ in terms of t-roots.
\begin{proposition}\label{J complex}
An iacs $J=\{\varepsilon_{\delta},\delta\in R_t\}$ is a complex structure if and only if $\varepsilon_{\delta}\varepsilon_{\eta}+1-\varepsilon_{\delta+\eta}(\varepsilon_{\delta}+\varepsilon_{\eta})=0$, whenever $\delta+\eta\in R_t$.
\end{proposition}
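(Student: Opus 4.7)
The plan is to verify integrability by computing the Nijenhuis tensor $N$ directly on the Weyl basis of $\mathfrak{m}^{\mathbb{C}}$. I would extend $J$ complex linearly, and since $N$ is a tensor while the $X_\alpha$ ($\alpha\in R_M$) span $\mathfrak{m}^{\mathbb{C}}$, the condition $N\equiv 0$ is equivalent to $N(X_\alpha,X_\beta)=0$ for all $\alpha,\beta\in R_M$. Because $J$ is defined only on $\mathfrak{m}^{\mathbb{C}}$, the brackets in the Nijenhuis formula must be understood after projection onto $\mathfrak{m}^{\mathbb{C}}$ along the reductive decomposition $\mathfrak{g}^{\mathbb{C}}=\mathfrak{k}_{\Theta}^{\mathbb{C}}\oplus\mathfrak{m}^{\mathbb{C}}$.

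Next I would split by the location of $\alpha+\beta$. If $\alpha+\beta\notin R\cup\{0\}$, or $\alpha+\beta=0$, or $\alpha+\beta\in R_{\Theta}$, then $[X_\alpha,X_\beta]$ either vanishes or lies in $\mathfrak{k}_{\Theta}^{\mathbb{C}}$ (through $H_\alpha\in\mathfrak{h}$ or $n_{\alpha,\beta}X_{\alpha+\beta}\in\mathfrak{g}_{\alpha+\beta}$). Since the other three brackets in the Nijenhuis formula differ from $[X_\alpha,X_\beta]$ only by scalars (using $JX_\gamma=i\varepsilon_\gamma X_\gamma$), every term projects to zero and $N(X_\alpha,X_\beta)=0$ automatically. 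Crucially, these cases correspond exactly to $\delta+\eta:=k(\alpha)+k(\beta)\notin R_t$, so they are excluded by the hypothesis $\delta+\eta\in R_t$ of the statement.

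The only remaining case is $\alpha+\beta\in R_M$, where $[X_\alpha,X_\beta]=n_{\alpha,\beta}X_{\alpha+\beta}$ lies in $\mathfrak{m}^{\mathbb{C}}$. Substituting $JX_\gamma=i\varepsilon_\gamma X_\gamma$ for $\gamma\in\{\alpha,\beta,\alpha+\beta\}$ and collecting terms,
\begin{align*}
-\tfrac{1}{2}N(X_\alpha,X_\beta) &= -[JX_\alpha,JX_\beta]+[X_\alpha,X_\beta]+J[X_\alpha,JX_\beta]+J[JX_\alpha,X_\beta]\\
&= n_{\alpha,\beta}\bigl[\varepsilon_\alpha\varepsilon_\beta+1-\varepsilon_{\alpha+\beta}(\varepsilon_\alpha+\varepsilon_\beta)\bigr]X_{\alpha+\beta}.
\end{align*}
Since $n_{\alpha,\beta}\neq 0$, vanishing of $N(X_\alpha,X_\beta)$ is equivalent to vanishing of the bracketed scalar. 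By Proposition \ref{iacs}, $\varepsilon_\gamma$ depends only on $k(\gamma)$; writing $\delta=k(\alpha)$, $\eta=k(\beta)$ (so $\delta+\eta=k(\alpha+\beta)\in R_t$) translates this to $\varepsilon_\delta\varepsilon_\eta+1-\varepsilon_{\delta+\eta}(\varepsilon_\delta+\varepsilon_\eta)=0$. Conversely, given the identity for every pair of t-roots with $\delta+\eta\in R_t$, the same computation makes $N$ vanish on every pair with $\alpha+\beta\in R_M$, while the other cases vanish automatically, yielding $N\equiv 0$.

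The principal obstacle is the bookkeeping in the case analysis, specifically matching the "automatic" cases $\alpha+\beta\in R_\Theta\cup\{0\}$ on the root side with the "excluded" regime $\delta+\eta\notin R_t$ on the t-root side, so that the hypothesis of the statement covers precisely the nontrivial pairs. Once the projection to $\mathfrak{m}^{\mathbb{C}}$ is tracked correctly, the argument is a direct expansion of the four bracket terms.
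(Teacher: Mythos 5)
Your proof follows essentially the same route as the paper: compute $N$ on the Weyl basis, observe that after projection to $\mathfrak{m}^{\mathbb{C}}$ only the case $\alpha+\beta\in R_M$ contributes, and translate the resulting scalar condition to t-roots via Proposition \ref{iacs}. The one claim stated slightly too strongly is that the automatically vanishing cases ``correspond exactly'' to $\delta+\eta\notin R_t$; for the only-if direction one also needs that every pair $\delta,\eta$ with $\delta+\eta\in R_t$ is realized by some $\alpha,\beta\in R_M$ with $\alpha+\beta\in R_M$ (which follows from Lemma \ref{lema soma}), though the paper's own proof elides this point as well.
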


\begin{proof}
An easy computation of Nijenhuis tensor on the Weyl base shows that
\begin{equation}\label{nij}
-\dfrac{1}{2}N(X_{\alpha},X_{\beta})  =\left\{
\begin{array}
[c]{cc}%
0, &\quad \text{if}\quad \alpha+\beta\notin R,\\
n_{\alpha,\beta}\{\varepsilon_{\alpha}\varepsilon_{\beta}+1-\varepsilon_{\alpha+\beta}(\varepsilon_{\alpha}+\varepsilon_{\beta})\}X_{\alpha+\beta}, &\quad \text{if}\quad \alpha+\beta \in R.
\end{array}
\right. 
\end{equation}
Now if $k(\alpha)=\delta\in R_t$ and $k(\beta)=\eta\in R_t$ then $\varepsilon_{\alpha}=\varepsilon_{\delta}$, $\varepsilon_{\beta}=\varepsilon_{\eta}$ and $\varepsilon_{\alpha+\beta}=\varepsilon_{\delta+\eta}$, if $\alpha+\beta\in R_M$.
Note that if $\alpha, \beta \in R_M$ are such that $\alpha+\beta\in R_\Theta$ then $N(X_{\alpha},X_{\beta})=0$.
\end{proof}

\begin{corollary}\label{J iso irr}
Every iacs $J$ on a isotropy irreducible flag manifold $\F$, i.e. $|R_t^+|=1$, is an invariant complex structure.
\end{corollary}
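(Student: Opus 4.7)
The plan is to apply Proposition \ref{J complex} directly and observe that its hypothesis is vacuous in the isotropy irreducible case. Since $|R_t^+|=1$, write $R_t^+=\{\xi\}$ so that $R_t=\{\xi,-\xi\}$. By Proposition \ref{iacs}, an iacs $J$ on $\F$ is prescribed by a single sign $\varepsilon_{\xi}=\pm 1$ (with $\varepsilon_{-\xi}=-\varepsilon_{\xi}$), so there are exactly two iacs on $\F$, and I only need to check that the integrability condition is automatically satisfied for either choice of sign.

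By Proposition \ref{J complex}, $J$ is integrable provided
\[
\varepsilon_{\delta}\varepsilon_{\eta}+1-\varepsilon_{\delta+\eta}(\varepsilon_{\delta}+\varepsilon_{\eta})=0
\]
holds for all pairs $\delta,\eta\in R_t$ such that $\delta+\eta\in R_t$. I would therefore enumerate the possible sums $\delta+\eta$ with $\delta,\eta\in\{\xi,-\xi\}$: they are $2\xi$, $-2\xi$, and $0$. Since $\xi\neq 0$, none of $\{0,2\xi,-2\xi\}$ coincides with $\xi$ or $-\xi$, so none of them lies in $R_t$. Hence there is no pair $(\delta,\eta)$ for which the integrability condition must be tested, and the condition holds vacuously.

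There is no real obstacle here; the content is entirely in Proposition \ref{J complex}, together with the observation that a one-element set of positive t-roots does not admit any nontrivial additive relation within $R_t$. Thus every iacs $J$ on an isotropy irreducible flag manifold has vanishing Nijenhuis tensor, i.e.\ is an invariant complex structure.
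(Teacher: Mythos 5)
Your proof is correct and follows exactly the route the paper intends: the corollary is stated as an immediate consequence of Proposition \ref{J complex}, and your observation that $R_t=\{\xi,-\xi\}$ admits no sum $\delta+\eta$ lying in $R_t$ (since $0,\pm 2\xi\notin R_t$) is precisely why the integrability condition is vacuous. Nothing further is needed.
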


The next example shows that if $|R_t^+|\geq 2$ it is easy to construct a iacs $J$ not complex.

\begin{example} Let $\F$ be a flag manifold with two isotropy summands, i.e. the real tangent space decomposes into two irreducible submodules $\mathfrak{m}=\mathfrak{m_1}\oplus\mathfrak{m_2}$. In this case  $R_t=\{\pm\delta, \pm 2\delta\}$, where $\delta\in t^*\setminus 0$, (cf. \cite{art 1}). Using the Proposition \ref{J complex} it is easy to see that the iacs given by $J: \varepsilon_{2\delta}=1,\varepsilon_{\delta}=-1$ is not complex.
\end{example}

In the other hand, there is natural manner to obtain invariant complex structures on $\F$, as below.
Consider the space $\mathfrak{t}$ with the collection of hyperplanes
$\Gamma=\left\{H\in\mathfrak{t}:\delta(H)=0, \delta\in  R_t\right\}$. The complementary set $ \mathfrak{t}\setminus\Gamma$ of the union of these hyperplanes is 
open and dense in $\mathfrak{t}$. Its connected components in $\mathfrak{t}$ are called a chamber in $\mathfrak{t}$ or a t-chamber. Each t-chamber is a cone in $\mathfrak{t}$ which is given by the inequalities $V=\{\delta_1>0, \dots,\delta_m>0\}$, where $\delta_i$ are t-roots corresponding to its faces. 

A subset $\Sigma_t=\left\{\delta_{1},\ldots,\delta_{n}\right\}$ of t-roots is called a \emph{basis of system $R_t$} if all vectors of $R_t$  have integer coordinates of same sign $\left(\geq 0\text{ or }\leq 0\right)$ with respect to $\Sigma_t$. Note that the basis given in Section 4 is a basis of system $R_t$. 

From \cite{Alek e Perol} and \cite{BH} it is known that there exists natural one-to-one correspondence between:
\begin{enumerate}

\item Parabolic subalgebras $\mathfrak{p}_\Theta$ in $\mathfrak{g}$ with reductive part $\mf{h}\oplus\sum_{\alpha\in\Pt}\mf{g}_{\alpha}$;

\item Basis $\Sigma$ of the root system $R$ which contain a fixed basis $\Theta$ of the root system $R_{\Theta}$;
\item Basis of system $R_t$;
\item t-chambers;
\item A choice of positive roots $R^+_M$ in $R_M$ satisfying:
\begin{enumerate}
    \item $R=\Theta\cup R^+_M\cup R^-_M$ (disjoint union), where $R^-_M=\{-\alpha; \alpha\in R_M^+\}$;
    \item if $\alpha\in R_{\Theta}\cup R^+_M $, $\beta \in R^+_M$ with $\alpha+\beta\in R$, then $\alpha+\beta\in R^+_M$ (cf. \cite{Arv art2}).
\end{enumerate}
\item Invariant complex structures on $\F$, which are determined by 
\begin{equation}\label{ics}
    J X_{\pm\alpha}=\pm i X_{\pm\alpha}, \quad \text{with}\quad \alpha\in  R^+_M.
\end{equation}

\item Reflections $\omega$ of the Weyl group (of the root system $R$) which satisfy 
$\omega\Theta\subset\Sigma$.
\end{enumerate}

Now we describe the invariant metrics on flag manifolds.
A Riemannian invariant metric on $\F$ is completely determined by a real inner product $g\left(\cdot,\cdot\right)$ on $\mathfrak{m}=T_{o}\F$ which is invariant by the adjoint action of $\mf{k}_{\Theta}$. Besides that any real inner product $\ad(\mf{k}_{\Theta})$-invariant on $\mf{m}$ has the form

\begin{equation}\label{inner product}
g\left(\cdot,\cdot\right)=-\lambda_1 B\left(\cdot,\cdot\right)|_{\mathfrak{m}_{1}\times\mathfrak{m}_{1}}-\cdots
-\lambda_s B\left(\cdot,\cdot\right)|_{\mathfrak{m}_{s}\times\mathfrak{m}_{s}}
\end{equation}\\
where ${\mathfrak{m}}_{i}=\mf{m}_{\xi_i}$ and $\lambda_i=\lambda_{\xi_i}>0$ with $\xi_{i}\in R^{+}_{\mathfrak{t}}$, for $i=1,\ldots,s$. So any invariant Riemannian metric on $\F$ is determined by $|R_{\mf{t}}^+|$ positive parameters. We will call an inner product defined by (\ref{inner product}) as an invariant metric on $\F$.

An invariant metric 
 $g(\cdot, \cdot)$ is called {\it normal} if there exist a bi-invariant metric on $\mathfrak{g}$ such that the restriction to $\m=\mathfrak{k}^{\perp}$ is $g(\cdot, \cdot)$. In other words, $g(\cdot, \cdot)$ is a normal metric is 
 $$
 g(\cdot, \cdot)=-\lambda B\left(\cdot,\cdot\right)|_{\mathfrak{m}}\quad \text{or} \quad \lambda_1=\cdots=\lambda_s=\lambda.
 $$

Let $\F$ be a flag manifold endowed an invariant metric $g$ and an almost complex structute $J$. Computing 
$g(JX,JY)$ for the Weyl basis chosen it is easy to see that $g$ is
\emph{almost Hermitian} with respect to $J$, i.e,
$g(JX,JY)=g(X,Y)$. The pair $(g,J)$ is called an \emph{invariant almost Hermitian structure} on $\F$.

\section{K\"ahler form}

Consider an invariant almost Hermitian structure $(g,J)$ on a flag manifold  $\F$. We denote by $\Omega=\Omega_{J,g}$ the correspondent K\"{a}hler form:
\begin{equation}
\Omega\left(X,Y\right)=g\left(X,JY\right), \hspace{1cm}X,Y\in\mathfrak{m}.\label{forma de
kahler}
\end{equation}

We also denote by $\Omega$, its natural extension to a $G$-invariant 2-form on  $\mathfrak{m}^{\mathbb{C}}$. On the Weyl basis $\Omega$ is given by
\[
\Omega(X_{\alpha},X_{\beta})=g\left(
X_{\alpha},JX_{\beta}\right)=-i\lambda_{k(\alpha)}\varepsilon_{k(\beta)}B\left(X_{\alpha},X_{\beta}\right)=
\left\{
\begin{array}
[c]{c}%
-i
\varepsilon_{k(\alpha)}\lambda_{k(\alpha)}, \quad \text{
if 
}\quad \beta=-\alpha, \\ \\
0, \quad \text{ otherwise},
\end{array}
\right.
\]
where $k(\alpha)$, $k(\beta)$ are the correspondents t-roots
to $\alpha$ and $\beta$ respectively. Then from decomposition
\begin{equation*}
\mathfrak{m}^{\mathbb{C}}=\mathfrak{m}^{\mathbb{C}}_{1}\oplus\mathfrak{m}^{\mathbb{C}}_{2}\oplus\cdots\oplus\mathfrak{m}^{\mathbb{C}}
_{2s}
\end{equation*}
in $\mathfrak{k}^{\mathbb{C}}$-irreducible and inequivalent submodules we obtain 
\[
\Omega\left(\cdot,\cdot\right)=\sum_{\alpha\in R_{M}^+}-i\varepsilon_{\alpha}\lambda_{\alpha}
B\left(\cdot,\cdot\right)|_{\mathfrak{g}^{\mathbb{C}}_{\alpha}\times\mathfrak{g}^{\mathbb{C}}_{-\alpha}}
=\sum_{\delta\in R_{\mathfrak{t}}^+}-i\varepsilon_{\delta}\lambda_{\delta}\left(\sum\limits_{\substack{\alpha\in R_{M}^+\\k(\alpha)=\delta}}
B\left(\cdot,\cdot\right)|_{\mathfrak{g}^{\mathbb{C}}_{\alpha}\times\mathfrak{g}^{\mathbb{C}}_{-\alpha}}\right)
=\sum_{\delta\in R^{+}_{\mathfrak{t}}}
-i\varepsilon_{\delta}\lambda_{\delta}
B\left(\cdot,\cdot\right)|_{\mathfrak{m}^{\mathbb{C}}_{\delta}\times\mathfrak{m}^{\mathbb{C}}_{-\delta}}.
\]

The exterior differential $d\Omega$ computed at $T_o\F$ is
$$
3d\Omega(X,Y,Z)=-\Omega([X,Y],Z)+\Omega([X,Z],Y)-\Omega([Y,Z],X)
$$
for $X,Y,Z\in\mathfrak{m}^{\mathbb{C}}$ (cf. \cite{Kob}, 3.11). An easy computation on Weyl basis obtained in \cite{SM N} shows that if $\alpha,\beta,\gamma\in R_{M}$  then 
\begin{equation*}
d\Omega(X_{\alpha},X_{\beta},X_{\gamma})=\left\{
\begin{array}
[c]{c}%
-3in_{\alpha,\beta}\left(\varepsilon_{\alpha}\lambda_{\alpha}
+\varepsilon_{\beta}\lambda_{\beta}+\varepsilon_{\gamma}\lambda_{\gamma}\right),\quad\text{
if 
}\quad \alpha+\beta+\gamma=0, \\ \\
0,\quad \text{ otherwise.}
\end{array}
\right.
\end{equation*}

The exterior differential $d\Omega$ has a similar look in terms of t-roots. To prove it we need the following result.

\begin{lemma}(\cite{Alek-Arv}, Lemma 4)\label{lema soma} Let $\xi,\eta,\zeta$ be t-roots such that $\xi+\eta+\zeta=0$. Then there exist roots 
$\alpha,\beta,\gamma\in R_{M}$ with
$k(\alpha)=\xi,k(\beta)=\eta,k(\gamma)=\zeta$, satisfying 
$\alpha+\beta+\gamma=0$.
\end{lemma}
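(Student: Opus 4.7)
The plan is to translate the conclusion into a statement about Lie brackets on the $\mathfrak{t}$-isotypic decomposition of $\mathfrak{m}^{\mathbb{C}}$. Recall that $\mathfrak{m}^{\mathbb{C}} = \bigoplus_{\delta \in R_t} \mathfrak{m}_\delta^{\mathbb{C}}$ with $\mathfrak{m}_\delta^{\mathbb{C}} = \sum_{k(\alpha) = \delta} \mathbb{C} X_\alpha$, and the Lie bracket respects this $\mathfrak{t}$-grading in the obvious way. Since $\xi + \eta = -\zeta \in R_t$, the bracket restricts to a $\mathfrak{k}_\Theta^{\mathbb{C}}$-equivariant map
\[
[\,\cdot\,,\,\cdot\,]\colon \mathfrak{m}_\xi^{\mathbb{C}} \otimes \mathfrak{m}_\eta^{\mathbb{C}} \longrightarrow \mathfrak{m}_{-\zeta}^{\mathbb{C}}.
\]
If I can prove this map is \emph{not} identically zero, then some $[X_\alpha, X_\beta]$ is nonzero with $\alpha \in k^{-1}(\xi)$, $\beta \in k^{-1}(\eta)$; by the Weyl basis relations this forces $\alpha + \beta \in R$ with $k(\alpha+\beta) = -\zeta$, and then $\gamma := -(\alpha+\beta)$ is a root with $k(\gamma) = \zeta \neq 0$, hence $\gamma \in R_M$, giving $\alpha + \beta + \gamma = 0$ as required.

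The heart of the proof is therefore to show $[\mathfrak{m}_\xi^{\mathbb{C}}, \mathfrak{m}_\eta^{\mathbb{C}}] \neq 0$. Since $\mathfrak{m}_{-\zeta}^{\mathbb{C}}$ is irreducible as a $\mathfrak{k}_\Theta^{\mathbb{C}}$-module and the bracket is equivariant, the image is either $0$ or the whole $\mathfrak{m}_{-\zeta}^{\mathbb{C}}$. Suppose for contradiction it is zero. Using invariance of the Killing form, for every $X \in \mathfrak{m}_\xi^{\mathbb{C}}$, $Y \in \mathfrak{m}_\eta^{\mathbb{C}}$, $Z \in \mathfrak{m}_\zeta^{\mathbb{C}}$ I would write
\[
0 = B([X,Y], Z) = B(Y, [Z, X]),
\]
and then invoke the nondegeneracy of $B\colon \mathfrak{m}_\eta^{\mathbb{C}} \times \mathfrak{m}_{-\eta}^{\mathbb{C}} \to \mathbb{C}$ together with $[\mathfrak{m}_\zeta^{\mathbb{C}}, \mathfrak{m}_\xi^{\mathbb{C}}] \subset \mathfrak{m}_{-\eta}^{\mathbb{C}}$ to conclude $[\mathfrak{m}_\zeta^{\mathbb{C}}, \mathfrak{m}_\xi^{\mathbb{C}}] = 0$ as well. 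Cyclic symmetry then kills the third cross-bracket. At this point I would show that the $\mathfrak{k}_\Theta^{\mathbb{C}}$-invariant subspace generated under iterated brackets by $\mathfrak{m}_{\pm\xi}^{\mathbb{C}} \oplus \mathfrak{m}_{\pm\eta}^{\mathbb{C}}$ fails to reach $\mathfrak{m}_{-\zeta}^{\mathbb{C}}$, contradicting the fact that a simple $\mathfrak{g}$ (which is our case, since the Dynkin diagram of $\Sigma$ is connected) cannot admit a proper $\mathfrak{t}$-graded ideal whose support is a proper symmetric subset of $R_t \cup \{0\}$.

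The main obstacle I expect is this last step: making precise the claim that the vanishing of those three brackets contradicts simplicity. The clean way is to work entirely inside the $\mathfrak{t}$-graded structure $\mathfrak{g} = \mathfrak{z}_{\mathfrak{g}}(\mathfrak{t}) \oplus \bigoplus_{\delta \in R_t} \mathfrak{g}_\delta$, where $\mathfrak{g}_\delta = \mathfrak{m}_\delta^{\mathbb{C}}$ for $\delta \neq 0$, and invoke the general fact that for a simple Lie algebra with such a grading the bracket $\mathfrak{g}_\delta \otimes \mathfrak{g}_{\delta'} \to \mathfrak{g}_{\delta+\delta'}$ is nonzero whenever all three graded pieces are; this in turn follows from the bracket $[\mathfrak{g}_\delta, \mathfrak{g}_{-\delta}] \ni H_\alpha \neq 0$ (for any $\alpha \in k^{-1}(\delta)$) combined with a Jacobi-identity propagation argument. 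Once nonvanishing of the bracket is secured, the extraction of the triple $(\alpha, \beta, \gamma)$ described in the first paragraph is immediate.
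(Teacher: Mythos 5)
Your reduction is sound: the bracket restricts to a $\mathfrak{k}_\Theta^{\mathbb{C}}$-equivariant map $\mathfrak{m}_\xi^{\mathbb{C}}\otimes\mathfrak{m}_\eta^{\mathbb{C}}\to\mathfrak{m}_{-\zeta}^{\mathbb{C}}$, and if it is not identically zero then some $[X_\alpha,X_\beta]\neq 0$ forces $\alpha+\beta\in R$ with $k(\alpha+\beta)=-\zeta\neq 0$, so $\gamma=-(\alpha+\beta)\in R_M$ completes the triple. The Killing-form step propagating a hypothetical vanishing of $[\mathfrak{m}_\xi^{\mathbb{C}},\mathfrak{m}_\eta^{\mathbb{C}}]$ to the other two cross-brackets is also correct. (For the record, the paper does not prove this lemma at all; it imports it from Alekseevsky--Arvanitoyeorgos, Lemma~4, so there is no internal proof to compare against, and any complete argument you give is added value.)

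The genuine gap is exactly where you yourself flag it, and it is not merely a matter of polishing. The ``general fact'' you invoke --- that for a simple Lie algebra with this grading the bracket $\mathfrak{g}_\delta\otimes\mathfrak{g}_{\delta'}\to\mathfrak{g}_{\delta+\delta'}$ is nonzero whenever all three graded pieces are --- \emph{is} the lemma being proved (for $\delta,\delta'\neq 0$), so invoking it is circular; and the proposed derivation from $[\mathfrak{g}_\delta,\mathfrak{g}_{-\delta}]\ni H_\alpha\neq 0$ plus ``Jacobi propagation'' is not supplied and does not obviously exist. Concretely, the vanishing of the three cross-brackets between $\mathfrak{m}_{\pm\xi}^{\mathbb{C}}$, $\mathfrak{m}_{\pm\eta}^{\mathbb{C}}$, $\mathfrak{m}_{\pm\zeta}^{\mathbb{C}}$ does not prevent the ideal generated by $\mathfrak{m}_{\pm\xi}^{\mathbb{C}}\oplus\mathfrak{m}_{\pm\eta}^{\mathbb{C}}$ from reaching $\mathfrak{m}_{-\zeta}^{\mathbb{C}}$ through longer composites (for instance via $\mathfrak{m}_{2\xi+\eta}^{\mathbb{C}}$ followed by bracketing with $\mathfrak{m}_{-\xi}^{\mathbb{C}}$), so no contradiction with simplicity is actually produced; your argument would go through verbatim for more general abelian gradings, where the conclusion can fail. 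What is really needed is the specific root-combinatorial structure of $t$-roots: one must exhibit roots $\alpha\in k^{-1}(\xi)$ and $\mu\in k^{-1}(\xi+\eta)$ with $(\mu,\alpha)>0$ (adjusting by the Weyl group of $R_\Theta$, which acts on each fibre $k^{-1}(\delta)$, until this holds), so that $\beta:=\mu-\alpha$ is a root with $k(\beta)=\eta$. Without an input of this kind the decisive non-vanishing $[\mathfrak{m}_\xi^{\mathbb{C}},\mathfrak{m}_\eta^{\mathbb{C}}]\neq 0$ remains unproved.
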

%
%

\begin{proposition} \label{12 simp}
Let $\delta,\zeta,\eta$ be t-roots then
$d\Omega(\mathfrak{m}^{\mathbb{C}}_{\delta},\mathfrak{m}^{\mathbb{C}}_{\zeta},\mathfrak{m}^{\mathbb{C}}_{\eta})=\left\{0\right\}$,
unless if  $\delta,\zeta,\eta$ are tzs on $R_t$. In this case
\[
d\Omega(X,Y,Z)=-3ir\left(\varepsilon_{\delta}\lambda_{\delta}
+\varepsilon_{\zeta}\lambda_{\zeta}+\varepsilon_{\eta}\lambda_{\eta}\right).
\]
where $r$ is a non zero constant and $X,Y,Z$ are vectors in
$\mathfrak{m}^{\mathbb{C}}_{\delta},\mathfrak{m}^{\mathbb{C}}_{\zeta}$
and $\mathfrak{m}^{\mathbb{C}}_{\eta}$, respectively.
\end{proposition}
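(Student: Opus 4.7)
The plan is to reduce the computation of $d\Omega$ on the $\ad(\mathfrak{k}_\Theta^\mathbb{C})$-isotypic components to the known Weyl-basis formula already quoted above. First, I would expand arbitrary vectors $X\in\mathfrak{m}^\mathbb{C}_\delta$, $Y\in\mathfrak{m}^\mathbb{C}_\zeta$, $Z\in\mathfrak{m}^\mathbb{C}_\eta$ in the Weyl basis, writing $X=\sum_{k(\alpha)=\delta}x_\alpha X_\alpha$ and similarly for $Y,Z$. By trilinearity,
\[
d\Omega(X,Y,Z)=\sum_{k(\alpha)=\delta,\,k(\beta)=\zeta,\,k(\gamma)=\eta} x_\alpha y_\beta z_\gamma\, d\Omega(X_\alpha,X_\beta,X_\gamma).
\]
The Weyl-basis formula recalled just before Lemma \ref{lema soma} kills every term with $\alpha+\beta+\gamma\neq 0$, so only triples with $\alpha+\beta+\gamma=0$ survive. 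Applying $k$ to such a relation forces $\delta+\zeta+\eta=0$; hence if $(\delta,\zeta,\eta)$ is not a triple zero sum, the entire sum is zero.

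Next, assume $\delta+\zeta+\eta=0$. The surviving terms satisfy $\alpha+\beta+\gamma=0$ with the prescribed t-roots, so using $\varepsilon_\alpha=\varepsilon_{k(\alpha)}=\varepsilon_\delta$ and $\lambda_\alpha=\lambda_{k(\alpha)}=\lambda_\delta$ (constant on each $\ad(\mathfrak{k}_\Theta)$-submodule), the coefficient $\varepsilon_\alpha\lambda_\alpha+\varepsilon_\beta\lambda_\beta+\varepsilon_\gamma\lambda_\gamma$ equals $\varepsilon_\delta\lambda_\delta+\varepsilon_\zeta\lambda_\zeta+\varepsilon_\eta\lambda_\eta$ and factors out of the sum. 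What remains is
\[
d\Omega(X,Y,Z)=-3i\,r\,\bigl(\varepsilon_\delta\lambda_\delta+\varepsilon_\zeta\lambda_\zeta+\varepsilon_\eta\lambda_\eta\bigr),\qquad r:=\sum_{\alpha+\beta+\gamma=0} n_{\alpha,\beta}\,x_\alpha y_\beta z_\gamma,
\]
where the index set is as above. This is the desired form; the content of the proposition is then that this coefficient $r$ can be chosen non-zero, i.e.\ that the sum is not identically zero as a trilinear function of $(X,Y,Z)$.

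The main (and really only nontrivial) step is exactly this non-vanishing claim, and it is here that Lemma \ref{lema soma} enters: it guarantees the existence of at least one triple $(\alpha,\beta,\gamma)\in R_M^3$ with $k(\alpha)=\delta$, $k(\beta)=\zeta$, $k(\gamma)=\eta$ and $\alpha+\beta+\gamma=0$. For such a triple the structure constant $n_{\alpha,\beta}$ is non-zero (since $\alpha+\beta=-\gamma\in R$), so evaluating on $X=X_\alpha$, $Y=X_\beta$, $Z=X_\gamma$ gives $r=n_{\alpha,\beta}\neq 0$. This shows that the formula is genuinely non-degenerate whenever $(\delta,\zeta,\eta)$ is a tzs, and completes the proof; the first half (vanishing off tzs) is immediate once one passes from root sums to t-root sums, while the second half is in essence a faithful packaging of the Alek-Arvanitoyeorgos lifting lemma.
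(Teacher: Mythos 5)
Your proof follows essentially the same route as the paper's: expand $X,Y,Z$ in the Weyl basis, use the formula for $d\Omega(X_\alpha,X_\beta,X_\gamma)$ to kill all triples with $\alpha+\beta+\gamma\neq 0$ (hence the vanishing off tzs), factor out the common coefficient via the constancy of $\varepsilon$ and $\lambda$ on the fibers of $k$, and invoke Lemma \ref{lema soma} to produce a lifting triple. If anything, your treatment of the non-vanishing of $r$ --- exhibiting one evaluation $X=X_\alpha$, $Y=X_\beta$, $Z=X_\gamma$ where $r=n_{\alpha,\beta}\neq 0$, rather than asserting $r\neq 0$ for arbitrary nonzero vectors --- is the more defensible reading, since the paper's own justification (``$r$ is not zero because $\alpha+\beta$ is a root'') does not by itself exclude cancellation among the terms of $\sum_{\alpha,\beta,\gamma} a_\alpha b_\beta c_\gamma\, n_{\alpha,\beta}$ for particular choices of coefficients.
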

\begin{proof}

If $\delta,\zeta,\eta$ are not tzs on $R_t$ it is easy to see that $d\Omega(\mathfrak{m}^{\mathbb{C}}_{\delta},\mathfrak{m}^{\mathbb{C}}_{\zeta},\mathfrak{m}^{\mathbb{C}}_{\eta})=\left\{0\right\}$. 

On the other hand, if $\delta,\zeta,\eta$ are tzs on $R_t$ let  $\alpha,\beta,\gamma$ be roots in $R_{M}$ with
$k(\alpha)=\delta$, $k(\beta)=\zeta$ and $k(\gamma)=\eta$, satisfying
$\alpha+\beta+\gamma=0$, according to the previous Lemma.
Thus, from the description of invariant metrics and iacs, we obtain:
\[
d\Omega(X_{\alpha},X_{\beta},X_{\gamma})=-3in_{\alpha,\beta}\left(\varepsilon_{\alpha}\lambda_{\alpha}
+\varepsilon_{\beta}\lambda_{\beta}+\varepsilon_{\gamma}\lambda_{\gamma}\right)
=-3in_{\alpha,\beta}\left(\varepsilon_{\delta}\lambda_{\delta}
+\varepsilon_{\zeta}\lambda_{\zeta}+\varepsilon_{\eta}\lambda_{\eta}\right).
\]
Now note that if  $\alpha',\beta',\gamma'\in R_{M}$ are such that
$k(\alpha')=\delta$, $k(\beta')=\zeta$, $k(\gamma')=\eta$ and
$\alpha'+\beta'+\gamma'=0$ then
\[
d\Omega(X_{\alpha'},X_{\beta'},X_{\gamma'})=
-3in_{\alpha',\beta'}\left(\varepsilon_{\delta}\lambda_{\delta}
+\varepsilon_{\zeta}\lambda_{\zeta}+\varepsilon_{\eta}\lambda_{\eta}\right).
\]
Now let $X$,$Y$ and $Z$ be non zero vector in $\mathfrak{m}^{\mathbb{C}}_{\delta},\mathfrak{m}^{\mathbb{C}}_{\zeta}$
and $\mathfrak{m}^{\mathbb{C}}_{\eta}$, respectively. Using the Weyl basis we can write  $X=\sum_{\alpha}a_\alpha X_\alpha$, $Y=\sum_{\beta}b_\beta X_\beta$, $Z=\sum_{\gamma}c_\gamma X_\gamma$, where the sums is taken over all roots $\alpha$, $\beta$ and $\gamma$ in $R_M$ such that $k(\alpha)=\delta$, $k(\beta)=\zeta$ and $k(\gamma)=\eta$ satisfying
$\alpha+\beta+\gamma=0$. Then, using the linearity of the exterior differential 
\begin{eqnarray}
d\Omega(X,Y,Z)&=&
-3i\sum_{\alpha, \beta, \gamma}a_\alpha b_\beta c_\gamma n_{\alpha,\beta}\left(\varepsilon_{\alpha}\lambda_{\alpha}
+\varepsilon_{\beta}\lambda_{\beta}+\varepsilon_{\gamma}\lambda_{\gamma}\right)\nonumber\\
&=& -3i\left(\varepsilon_{k(\alpha)}\lambda_{k(\alpha)}
+\varepsilon_{k(\beta)}\lambda_{k(\beta)}+\varepsilon_{k(\gamma)}\lambda_{k(\gamma)}\right)\sum_{\alpha, \beta, \gamma}a_\alpha b_\beta c_\gamma n_{\alpha,\beta}\nonumber\\
&=& -3ir\left(\varepsilon_{\delta}\lambda_{\delta}
+\varepsilon_{\zeta}\lambda_{\zeta}+\varepsilon_{\eta}\lambda_{\eta}\right)\nonumber\\
\end{eqnarray}
where $r=\sum_{\alpha, \beta, \gamma}a_\alpha b_\beta c_\gamma n_{\alpha,\beta}$. Note that $r$ is not zero because $\alpha+\beta$ is a root.

\end{proof}

\begin{definition}
Let 
$J=\left\{\varepsilon_{\delta},\delta\in R_t\right\}$
be an iacs on $\F$. Let  $\{\delta,\zeta,\eta\}$ be a tzs of t-roots. The triple $\{\delta,\zeta,\eta\}$ is said to be 
\begin{enumerate}
\item a $(0,3)$-triple of t-roots if
$\varepsilon_{\delta}=\varepsilon_{\zeta}=\varepsilon_{\eta}$;
\item a $(1,2)$-triple of t-roots, otherwise.
\end{enumerate}
\end{definition}

\begin{remark}
The notion of  $(0,3)$-triple and $(1,2)$-triple of roots in $R$ was introduced in \cite{SM N} for full flag manifolds and in \cite{SM R} for partial flag manifolds. It is easy see that a tzs 
$\alpha,\beta,\gamma$ in $R_M$ is a (0,3)-triple (resp. (1,2)-triple) iff  $k(\alpha),k(\beta),k(\gamma)$ is a (0,3)-triple of t-roots
 (resp. (1,2)-triple of t-roots). In this sense, the above definition extends that one in \cite{SM N} or \cite{SM R}.
\end{remark}

\begin{lemma} \label{prop ics}
Let $\F$ endowed with the complex structure $J$ associated to a fixed choice of positive roots $R_M^+$ in $R_M$, then $J$ has no (0,3)-triple of t-roots. Reciprocally if an iacs $J$ has no (0,3)-triple of t-roots then $J$ is a complex structure on $\F$.
\end{lemma}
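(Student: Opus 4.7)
The plan is to prove the two implications separately, using Proposition \ref{J complex} as the integrability criterion.

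For the forward direction, I would start from the explicit form (\ref{ics}) of the complex structure $J$ associated with a fixed choice $R_M^+$: we have $\varepsilon_\alpha = +1$ for $\alpha \in R_M^+$ and $\varepsilon_\alpha = -1$ for $\alpha \in R_M^-$, and therefore $\varepsilon_\delta = +1$ for every $\delta \in R_t^+$ and $\varepsilon_\delta = -1$ for every $\delta \in R_t^-$. A hypothetical $(0,3)$-triple $\{\delta, \zeta, \eta\}$ would force all three signs to agree, so the three t-roots would all lie in $R_t^+$ or all in $R_t^-$. I would then rule this out using the basis $\{\bar{\beta}_1, \dots, \bar{\beta}_r\}$ of $\mathfrak{t}^*$ built in Section 4: positive t-roots are non-negative integer combinations of the $\bar{\beta}_j$ with at least one strictly positive coefficient (since t-roots are non-zero), so three such vectors cannot sum to zero, and symmetrically for three negative ones. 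This contradicts $\delta + \zeta + \eta = 0$.

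For the converse, I would verify the integrability condition of Proposition \ref{J complex}, namely
\[
\varepsilon_\delta \varepsilon_\eta + 1 - \varepsilon_{\delta+\eta}(\varepsilon_\delta + \varepsilon_\eta) = 0
\]
for every pair $\delta, \eta \in R_t$ with $\delta + \eta \in R_t$. I would split on whether the two signs agree. If $\varepsilon_\delta = -\varepsilon_\eta$, the left-hand side reduces to $-1 + 1 - 0 = 0$ automatically. If $\varepsilon_\delta = \varepsilon_\eta = \varepsilon$, the expression becomes $2(1 - \varepsilon\,\varepsilon_{\delta+\eta})$, which vanishes iff $\varepsilon_{\delta+\eta} = \varepsilon$. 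Setting $\gamma = -(\delta+\eta) \in R_t$, so that $\{\delta, \eta, \gamma\}$ is a tzs, and using $\varepsilon_{\delta+\eta} = -\varepsilon_\gamma$ (from $\varepsilon_{-\mu} = -\varepsilon_\mu$ proved in Proposition \ref{iacs}), this is equivalent to $\varepsilon_\gamma = -\varepsilon$, i.e.\ to $\{\delta, \eta, \gamma\}$ not being a $(0,3)$-triple. By hypothesis no $(0,3)$-triples exist, so this holds in every case and $J$ is a complex structure.

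The only subtle step is the forward direction: one must justify, beyond the mere fact that positive t-roots arise as restrictions of positive roots, that three of them cannot algebraically sum to zero. The cleanest route is the positivity of coefficients in the fundamental-weight basis $\{\bar{\beta}_j\}$; alternatively, one can invoke the characterization of $R_t^+$ as sitting in a strictly convex cone (the t-chamber dual) recorded in the enumeration preceding the statement. The remainder of the argument is a direct case analysis of the algebraic identity of Proposition \ref{J complex}.
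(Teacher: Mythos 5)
Your proof is correct; the only place it genuinely diverges from the paper is in the forward direction. The paper lifts a zero--sum triple of t-roots to a zero--sum triple $\{\alpha,\beta,\gamma\}\subset R_M$ (this uses Lemma \ref{lema soma}) and then observes that three roots of $R$ summing to zero cannot all be positive nor all negative, so the signs $\varepsilon_\alpha,\varepsilon_\beta,\varepsilon_\gamma$ cannot coincide; you instead stay at the level of t-roots and rule out a monochromatic zero--sum triple by the positivity of the coordinates of $R_t^+$ in the basis of Section 4. Both arguments rest on the same fact --- a choice of positivity is incompatible with an all-same-sign zero--sum triple --- but your version avoids the lifting lemma at the cost of having to know that $R_t^+$ sits in a strictly convex cone, which the paper records as the assertion that the basis of Section 4 is a basis of the system $R_t$. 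A minor point of precision: for $\alpha=\sum p_i\alpha_i+\sum n_j\beta_j\in R_M^+$ the natural coordinates of $k(\alpha)$ are the $n_j$ in the basis $\{k(\beta_1),\dots,k(\beta_r)\}$ (since $k$ annihilates $\Theta$); this is equivalent to, though not literally the same as, non-negativity in the fundamental-weight basis $\{\bar{\beta_j}\}$. Your converse direction is exactly what the paper means when it says the claim is ``clear from Proposition \ref{J complex}''; your case split on whether $\varepsilon_\delta=\varepsilon_\eta$, together with the identity $\varepsilon_{-(\delta+\eta)}=-\varepsilon_{\delta+\eta}$ from Proposition \ref{iacs}, is the computation behind that assertion.
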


\begin{proof}
Let $\{\delta, \zeta, \eta\}$ be a tzs of t-roots and $\{\alpha, \beta, \gamma\}$ a correspondent tzs in $R$, such that $k(\alpha)=\delta$, $k(\beta)=\zeta$ and $k(\gamma)=\eta$. Two of the roots $\{\alpha, \beta, \gamma\}$ are positive and the other is negative or two is negative and the other is positive, with respect to the choice of positive roots $R_M^+$ associated to $J$. Then, from (\ref{ics}), the triple  $\{\varepsilon_{\alpha},\varepsilon_{\beta},\varepsilon_{\gamma}\}$ has different signals. From Proposition \ref{iacs}, $\varepsilon_{\alpha}=\varepsilon_{\delta}$, $\varepsilon_{\beta}=\varepsilon_{\zeta}$, $\varepsilon_{\gamma}=\varepsilon_{\eta}$, then $\{\delta, \zeta, \eta\}$ is a (1,2)-triple of t-roots.

On the other hand, if $J$  has no (0,3)-triple of t-roots then from Proposition \ref{J complex} it is clear that $J$ is a complex structure. 
\end{proof}

An almost Hermitian manifold 
$(M,g,J)$ is \emph{(1,2)-sympletic (or quasi K\"{a}hler)} if
\[
d\Omega\left(X,Y,Z\right)=0
\]
when one of the vectors $X,Y,Z$ is of type $(1,0)$ and the other two of type $(0,1)$.

The next result classify the quasi K\"{a}hler class on flag manifolds in terms of t-roots.

\begin{proposition}\label{crit 12 simp}
An invariant almost Hermitian structure $\left(J=\left\{\varepsilon_{\delta}\right\},g=\left\{\lambda_{\delta}\right\}\right)$,
$\delta\in R_t$, on $\F$ is $(1,2)$-sympletic if and only if

\begin{equation} \label{(1,2) criterio}
\varepsilon_{\delta}\lambda_{\delta}+\varepsilon_{\zeta}\lambda_{\zeta}+\varepsilon_{\eta}\lambda_{\eta}=0
\end{equation}
for every $(1,2)$-triple of t-roots $\{\delta,\zeta,\eta\}$.
\end{proposition}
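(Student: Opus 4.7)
The plan is to test the $(1,2)$-symplectic condition on the isotypic blocks $\mathfrak{m}^{\mathbb{C}}_{\delta}$ and use Proposition \ref{12 simp} to convert the vanishing of $d\Omega$ into the algebraic identity (\ref{(1,2) criterio}). The starting point will be the eigenspace decomposition induced by $J$: from Proposition \ref{iacs}, $\mathfrak{m}^{\mathbb{C}}_{\delta}\subset T_{o}^{(1,0)}\F$ precisely when $\varepsilon_{\delta}=+1$, and $\mathfrak{m}^{\mathbb{C}}_{\delta}\subset T_{o}^{(0,1)}\F$ precisely when $\varepsilon_{\delta}=-1$. By trilinearity it is enough to test $d\Omega$ on triples $(X,Y,Z)$ with $X\in \mathfrak{m}^{\mathbb{C}}_{\delta}$, $Y\in\mathfrak{m}^{\mathbb{C}}_{\zeta}$, $Z\in\mathfrak{m}^{\mathbb{C}}_{\eta}$, so the $(1,2)$-symplectic condition concerns exactly the case $\varepsilon_{\delta}=+1$, $\varepsilon_{\zeta}=\varepsilon_{\eta}=-1$.

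For the forward implication I would fix a $(1,2)$-triple $\{\delta,\zeta,\eta\}$ of t-roots and, if necessary, replace it by $\{-\delta,-\zeta,-\eta\}$ --- a move that changes the left-hand side of (\ref{(1,2) criterio}) only by an overall sign, since $\varepsilon_{-\delta}=-\varepsilon_{\delta}$ and $\lambda_{-\delta}=\lambda_{\delta}$ --- in order to assume $\varepsilon_{\delta}=+1$ and $\varepsilon_{\zeta}=\varepsilon_{\eta}=-1$. Taking nonzero $X\in \mathfrak{m}^{\mathbb{C}}_{\delta}$, $Y\in\mathfrak{m}^{\mathbb{C}}_{\zeta}$, $Z\in\mathfrak{m}^{\mathbb{C}}_{\eta}$ produces a $(1,0)$-$(0,1)$-$(0,1)$ triple, so the $(1,2)$-symplectic hypothesis forces $d\Omega(X,Y,Z)=0$; Proposition \ref{12 simp} expresses the latter as $-3ir(\varepsilon_{\delta}\lambda_{\delta}+\varepsilon_{\zeta}\lambda_{\zeta}+\varepsilon_{\eta}\lambda_{\eta})=0$ with $r\neq 0$, which gives (\ref{(1,2) criterio}).

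For the converse, given arbitrary $X$ of type $(1,0)$ and $Y,Z$ of type $(0,1)$, I would decompose $X=\sum_{\varepsilon_{\delta}=1} X_{\delta}$, $Y=\sum_{\varepsilon_{\zeta}=-1} Y_{\zeta}$, $Z=\sum_{\varepsilon_{\eta}=-1} Z_{\eta}$ along the isotypic blocks and expand $d\Omega(X,Y,Z)$ by trilinearity. By Proposition \ref{12 simp} each summand $d\Omega(X_{\delta},Y_{\zeta},Z_{\eta})$ vanishes unless $\delta+\zeta+\eta=0$, and when the indexing triple is a tzs its sign pattern is $(+1,-1,-1)$, so it is a $(1,2)$-triple and the hypothesis annihilates that summand. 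Thus $d\Omega(X,Y,Z)=0$. The only real delicacy is the sign-handling in the first direction --- $(1,2)$-triples appear with sign patterns $(+,+,-)$ or $(+,-,-)$, which are interchanged by $\delta\mapsto -\delta$, and Proposition \ref{12 simp} only provides a nonzero test triple for the second pattern --- but this is dispatched by the negation symmetry and is not a substantial obstacle.
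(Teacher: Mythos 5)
Your proof is correct and follows the same route as the paper: test $d\Omega$ on the isotypic blocks $\mathfrak{m}^{\mathbb{C}}_{\delta}$ and invoke Proposition \ref{12 simp} with $r\neq 0$. It is in fact more complete than the paper's own proof, which records only the forward implication and silently passes over the $(+,+,-)$ sign pattern; your negation move $\{\delta,\zeta,\eta\}\mapsto\{-\delta,-\zeta,-\eta\}$ (using $\varepsilon_{-\delta}=-\varepsilon_{\delta}$ and $\lambda_{-\delta}=\lambda_{\delta}$) and the explicit converse by trilinear expansion correctly fill both gaps.
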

\begin{proof}
Consider a $(1,2)$-triple of t-roots $\{\delta,\zeta,\eta\}$ and let $\alpha$, $\beta$ and $\gamma$ in $R_M$ such that $\alpha+\beta+\gamma=0$ and $k(\alpha)=\delta$, $k(\beta)=\zeta$ and $k(\gamma)=\eta$. Then 
\begin{equation}\label{without 03 triples}
0=d\Omega(X_{\alpha},X_{\beta},X_{\gamma})=-3in_{\alpha,\beta}\left(\varepsilon_{\alpha}\lambda_{\alpha}
+\varepsilon_{\beta}\lambda_{\beta}+\varepsilon_{\gamma}\lambda_{\gamma}\right)
=-3in_{\alpha,\beta}\left(\varepsilon_{\delta}\lambda_{\delta}
+\varepsilon_{\zeta}\lambda_{\zeta}+\varepsilon_{\eta}\lambda_{\eta}\right).
\end{equation}
Since $n_{\alpha,\beta}\neq 0$ if $\alpha+\beta+\gamma=0$, we get (\ref{(1,2) criterio}). 
\end{proof}

An almost Hermitian manifold is said to be  \emph{almost K\"{a}hler}
if $\Omega$ is sympletic, i.e., $d\Omega=0$. When $d\Omega=0$
and $J$ is integrable the manifold is said to be  \emph{ K\"{a}hler},
(cf. \cite{Kob}). In the next result we used the idea performed in \cite{SM N} and \cite{SM R} for t-roots.
\begin{proposition}\label{12 simp e kahler}
An invariant almost Hermitian structure on $\F$ is almost 
K\"{a}hler if and only if it is K\"{a}hler.
\end{proposition}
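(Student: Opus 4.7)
The direction \emph{K\"ahler $\Rightarrow$ almost K\"ahler} is immediate from the definitions, so the content of the statement lies in the converse. My plan is to show that $d\Omega=0$ forces the absence of $(0,3)$-triples, and then invoke Lemma \ref{prop ics} to obtain integrability of $J$.

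Assume $(g,J)$ is almost K\"ahler, i.e.\ $d\Omega=0$. By Proposition \ref{12 simp}, for every tzs triple of t-roots $\{\delta,\zeta,\eta\}$ one has
\[
r\bigl(\varepsilon_{\delta}\lambda_{\delta}+\varepsilon_{\zeta}\lambda_{\zeta}+\varepsilon_{\eta}\lambda_{\eta}\bigr)=0,
\]
with $r\neq 0$, and hence
\[
\varepsilon_{\delta}\lambda_{\delta}+\varepsilon_{\zeta}\lambda_{\zeta}+\varepsilon_{\eta}\lambda_{\eta}=0.
\]

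Suppose, for contradiction, that $J$ admits a $(0,3)$-triple of t-roots $\{\delta,\zeta,\eta\}$, so that $\varepsilon_{\delta}=\varepsilon_{\zeta}=\varepsilon_{\eta}=\varepsilon\in\{\pm 1\}$. The identity above reduces to
\[
\varepsilon(\lambda_{\delta}+\lambda_{\zeta}+\lambda_{\eta})=0,
\]
which is impossible since all $\lambda_{\xi}>0$ by the description of invariant metrics in \eqref{inner product}. Therefore $J$ has no $(0,3)$-triple of t-roots.

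By Lemma \ref{prop ics} this implies that $J$ is integrable. Since in addition $d\Omega=0$, the pair $(g,J)$ is a K\"ahler structure on $\F$, proving the converse. The key observation that makes the argument work is the positivity of the metric parameters $\lambda_\xi$, which prevents any $(0,3)$-triple from satisfying the vanishing condition imposed by $d\Omega=0$; I do not anticipate any serious obstacle, as everything reduces to the already established Proposition \ref{12 simp} and Lemma \ref{prop ics}.
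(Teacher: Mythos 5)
Your proof is correct and follows essentially the same route as the paper: use the vanishing of $d\Omega$ on each tzs triple together with the positivity of the $\lambda_\xi$ to rule out $(0,3)$-triples, then apply Lemma \ref{prop ics} to conclude integrability. The only cosmetic difference is that you cite Proposition \ref{12 simp} where the paper cites its displayed formula (\ref{without 03 triples}); the content is identical.
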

\begin{proof}
Note that the a pair $(g,J)$ almost K\"{a}hler do not admits $(0,3)$-triples of t-roots. Because in this case, from (\ref{without 03 triples}), we would have the following equation
\[
\lambda_{\delta}+\lambda_{\zeta}+\lambda_{\eta}=0
\]
that is impossible since 
$\lambda_{\delta},\lambda_{\zeta},\lambda_{\eta}>0$. Thus the iacs 
$J$ admits only $(1,2)$-triples of t-roots and in this case, from Lemma \ref{prop ics}, $J$ is integrable and the pair $(g,J)$ is K\"{a}hler.

Of course if the pair $(g,J)$ is K\"{a}hler then it is almost K\"{a}hler.
\end{proof}

\section{$\mathcal{G}_1$  structures}

An invariant almost Hermitian structure $(g,J)$ on $\F$ is called a $\mathcal{G}_1$  structure (or $W_1\oplus W_3\oplus W_4$), according the sixteen classes in \cite{GH}, if 
$$
g(N(X,Y),X)=0,\quad X,Y\in \mathfrak{m}.
$$

It is clear that if  $J$ is a complex structure, i.e. $N(X,Y)=0$, then $(g,J)$ is a $\mathcal{G}_1$ structure for any invariant metric $g$ on $\F$. 



\begin{theorem}\label{J for all}
Fixed a choice of positive roots $R_M^+$ in $R_M$, the correspondent complex structure $J$ on $\F$ is a $\mathcal{G}_1$ structure with respect to all invariant metric $g$ on $\F$. 
\end{theorem}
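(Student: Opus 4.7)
The plan is to observe that this theorem is an immediate consequence of the integrability of the complex structure determined by a choice of positive roots, combined with the definition of the $\mathcal{G}_1$ condition. The work has essentially already been done in the preceding material, and the proof reduces to assembling two known facts.

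First, I would invoke the correspondence listed in Section 5 (specifically item 6 of the seven-fold correspondence, and equivalently Lemma \ref{prop ics}) to conclude that the almost complex structure $J$ associated to a choice of positive roots $R_M^+$ is actually an invariant complex structure on $\F$. Indeed, as shown in Lemma \ref{prop ics}, such a $J$ admits no $(0,3)$-triple of t-roots, and then Proposition \ref{J complex} (applied at the t-root level together with the relation $\varepsilon_\alpha = \varepsilon_{k(\alpha)}$ from Proposition \ref{iacs}) forces the Nijenhuis tensor to vanish on all pairs $X_\alpha, X_\beta$ of the Weyl basis of $\mathfrak{m}^{\mathbb{C}}$. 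By $\mathbb{C}$-bilinearity and the real structure arguments from Proposition \ref{iacs}, this extends to $N(X,Y) = 0$ for all $X, Y \in \mathfrak{m}$.

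Second, once $N \equiv 0$ is in hand, I would simply observe that the defining condition for a $\mathcal{G}_1$ structure, namely $g(N(X,Y),X) = 0$ for all $X,Y \in \mathfrak{m}$, is trivially satisfied regardless of which invariant metric $g$ is chosen, because the inner argument vanishes identically. This is precisely the remark made in the paragraph preceding the statement of the theorem, so the theorem is really just a formal recording of that observation in the context of the canonical complex structures coming from positive root choices.

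There is no genuine obstacle in this proof; the only subtlety is notational, namely making sure that the integrability statement transfers cleanly from the t-root criterion of Proposition \ref{J complex} to the actual vanishing of $N$ on the real tangent space $\mathfrak{m}$. This transfer uses the formula (\ref{nij}) together with the fact that $\varepsilon_\alpha$ depends only on $k(\alpha)$, and the reality condition $\varepsilon_{-\delta} = -\varepsilon_\delta$; all of these are already established, so no new computation is required.
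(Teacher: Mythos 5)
Your proposal is correct and matches the paper's (essentially tacit) argument: the paper states just before the theorem that a complex structure $J$ (i.e.\ $N\equiv 0$) is $\mathcal{G}_1$ for every invariant metric, and the integrability of the $J$ coming from a choice of positive roots is exactly item 6 of the Section 5 correspondence (equivalently, Lemma \ref{prop ics} combined with Proposition \ref{J complex}). Nothing further is needed.
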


\begin{corollary}\label{G1 iso irr}
If $\F$ is isotropy irreducible, then any invariant pair $(g,J)$ is a $\mathcal{G}_1$ structure.
\end{corollary}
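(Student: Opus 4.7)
The plan is to reduce the corollary to a single invocation of the integrability result for isotropy irreducible flags. Recall that isotropy irreducibility of $\F$ means that the decomposition $\m=\m_{\xi_1}\oplus\cdots\oplus\m_{\xi_s}$ has only one summand, i.e. $|R_t^+|=1$. First I would state this explicitly as the starting point of the argument.

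Next I would apply Corollary \ref{J iso irr}, which asserts that whenever $|R_t^+|=1$, every invariant almost complex structure $J$ on $\F$ is automatically integrable. (This is immediate from Proposition \ref{J complex}, since the integrability condition $\varepsilon_\delta\varepsilon_\eta+1-\varepsilon_{\delta+\eta}(\varepsilon_\delta+\varepsilon_\eta)=0$ is required only for pairs $\delta,\eta\in R_t$ with $\delta+\eta\in R_t$, and with only one positive t-root no such nontrivial pair exists.) Consequently the Nijenhuis tensor of any iacs $J$ satisfies $N(X,Y)=0$ for all $X,Y\in\m$.

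Finally I would observe that the $\mathcal{G}_1$ defining condition $g(N(X,Y),X)=0$ is then trivially satisfied, independently of which invariant metric $g$ is chosen, because $N\equiv 0$ makes the left-hand side identically zero. This is exactly the remark made just before Theorem \ref{J for all}: any complex structure gives a $\mathcal{G}_1$ structure paired with any invariant metric. Since in the isotropy irreducible case \emph{every} iacs is complex, every invariant pair $(g,J)$ is $\mathcal{G}_1$.

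There is no real obstacle here; the corollary is essentially a direct combination of Corollary \ref{J iso irr} (integrability in the isotropy irreducible case) with the trivial observation that a vanishing Nijenhuis tensor kills the $\mathcal{G}_1$ expression for every metric. The only point worth being careful about is to make sure the reader sees that the quantifier ``any invariant pair $(g,J)$'' is handled by the two independent facts: $J$ is forced to be integrable (no choice of $J$ can violate this) and $g$ plays no role once $N=0$.
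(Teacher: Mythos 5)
Your proof is correct and follows exactly the paper's route: the paper's own proof simply cites Corollary \ref{J iso irr} together with Theorem \ref{J for all} (equivalently, the remark that a vanishing Nijenhuis tensor makes $g(N(X,Y),X)=0$ trivially for every invariant metric). Your parenthetical justification that $|R_t^+|=1$ leaves no pair $\delta,\eta$ with $\delta+\eta\in R_t$ is a correct, slightly more explicit account of why Corollary \ref{J iso irr} holds.
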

\begin{proof}
Follows from Corollary \ref{J iso irr} and the previous Theorem.
\end{proof}
Now we study the annihilation of the tensor $g(N(X,Y),X)$ taking in account the vector roots from the Weyl basis. From (\ref{nij}), if $\alpha, \beta, \gamma$ are roots in $R_M$ it easy to see that 

\begin{equation} \label{g1 s}
-\dfrac{1}{2}g(N(X_{\alpha},X_{\beta}),X_{\gamma})=\left\{
\begin{array}
[c]{c}%
\lambda_\gamma n_{\alpha,\beta}(\varepsilon_\alpha\varepsilon_\beta +\varepsilon_\alpha\varepsilon_\gamma+\varepsilon_\beta\varepsilon_\gamma+1),
\quad\text{
if 
}\quad \alpha+\beta+\gamma=0, \\ \\
0, \hspace{0.5cm} \text{ otherwise},
\end{array}
\right.
\end{equation}
in particular $g(N(X_\alpha, X_\beta), X_\alpha)=0$, for $\alpha, \beta \in R_M$. 

Using similar arguments of \cite{SM N}, in the next result we obtain a classification of $\mathcal{G}_1$  structures in terms of t-roots. 

\begin{lemma}\label{g1 criterio}
The pair $(g,J)$ is a $\mathcal{G}_1$  structures on $\F$ if and only if $\lambda_{\delta}=\lambda_{\zeta}=\lambda_{\eta}$ if $\{\delta,\zeta,\eta\}$ is a  (0,3)-triple of t-roots.
\end{lemma}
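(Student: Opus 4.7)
The plan is to translate the $\mathcal{G}_1$ condition $g(N(X,Y),X)=0$ into a tensorial identity on the Weyl basis and then use Lemma~\ref{lema soma} to pass from root triples to t-root triples. First, I would polarize: since $g(N(X,Y),X)=0$ for all $X,Y\in\mathfrak{m}$, replacing $X$ by $X+Z$ and using bilinearity yields the equivalent condition
\begin{equation*}
g(N(X,Y),Z)+g(N(Z,Y),X)=0 \quad \text{for all } X,Y,Z\in\mathfrak{m}^{\mathbb{C}}.
\end{equation*}
It therefore suffices to evaluate this trilinear expression on Weyl-basis vectors $X=X_\alpha$, $Y=X_\beta$, $Z=X_\gamma$ with $\alpha,\beta,\gamma\in R_M$.

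By formula (\ref{g1 s}), each of the two summands above is nonzero only when $\alpha+\beta+\gamma=0$, so I only need to analyze that case. Using $n_{\gamma,\beta}=-n_{\beta,\gamma}=-n_{\alpha,\beta}$ (the last equality coming from the cyclic relation $n_{\alpha,\beta}=n_{\beta,\gamma}=n_{\gamma,\alpha}$ valid when $\alpha+\beta+\gamma=0$), a direct substitution into (\ref{g1 s}) gives
\begin{equation*}
g(N(X_\alpha,X_\beta),X_\gamma)+g(N(X_\gamma,X_\beta),X_\alpha)
= -2\,n_{\alpha,\beta}\,(\lambda_\gamma-\lambda_\alpha)\bigl(\varepsilon_\alpha\varepsilon_\beta+\varepsilon_\alpha\varepsilon_\gamma+\varepsilon_\beta\varepsilon_\gamma+1\bigr).
\end{equation*}
A case check on the $\varepsilon$-signs shows the last parenthesis equals $4$ on a $(0,3)$-triple and $0$ on a $(1,2)$-triple. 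Since $n_{\alpha,\beta}\neq 0$ whenever $\alpha+\beta\in R$, vanishing of the left-hand side on every triple with $\alpha+\beta+\gamma=0$ is equivalent to $\lambda_\alpha=\lambda_\gamma$ for every $(0,3)$-triple in $R_M$. Permuting the roles of $X,Y,Z$ (equivalently applying the same identity to the triples $(\alpha,\gamma,\beta)$ and $(\beta,\alpha,\gamma)$) yields $\lambda_\alpha=\lambda_\beta=\lambda_\gamma$ for every $(0,3)$-triple of roots in $R_M$.

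To conclude, I pass to t-roots. Since $\varepsilon$ and $\lambda$ depend only on the t-root class of a root (by Proposition \ref{iacs} and the description of invariant metrics), an $R_M$-triple $\{\alpha,\beta,\gamma\}$ is $(0,3)$ if and only if its image $\{k(\alpha),k(\beta),k(\gamma)\}$ is a $(0,3)$-triple of t-roots. Conversely, Lemma~\ref{lema soma} guarantees that every t-root triple $\{\delta,\zeta,\eta\}$ with $\delta+\zeta+\eta=0$ lifts to a root triple with the same sum-zero property; hence every $(0,3)$-triple of t-roots is realized by at least one $(0,3)$-triple of roots in $R_M$. This gives the equivalence stated in the lemma.

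The main subtlety I expect is the sign bookkeeping: one has to recognise that the two polarized terms do not cancel a priori but rather combine into $(\lambda_\gamma-\lambda_\alpha)$ thanks to the antisymmetry $n_{\gamma,\beta}=-n_{\alpha,\beta}$, and one has to justify that permuting the basis vectors independently produces the full symmetric condition $\lambda_\delta=\lambda_\zeta=\lambda_\eta$ from the weaker pairwise equality that a single evaluation provides. Everything else is a direct substitution or an appeal to the cited lemma.
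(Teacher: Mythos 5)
Your proposal is correct and follows essentially the same route as the paper: both evaluate the polarized expression $g(N(X_\alpha,X_\beta),X_\gamma)+g(N(X_\gamma,X_\beta),X_\alpha)$ on the Weyl basis, use $n_{\gamma,\beta}=-n_{\alpha,\beta}$ to produce the factor $(\lambda_\gamma-\lambda_\alpha)$, observe that the $\varepsilon$-factor vanishes exactly off the $(0,3)$-triples, and invoke Lemma~\ref{lema soma} to transfer the condition to t-roots. The only (harmless) difference is that you make the polarization equivalence explicit, whereas the paper verifies the converse by expanding $X=\sum_\alpha a_\alpha X_\alpha$ directly.
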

\begin{proof}
Let $\{\delta,\zeta,\eta\}$ is a tzs of t-roots and $\alpha, \beta, \gamma \in R_M$ a tzs of roots such that $k(\alpha)=\delta$, $k(\beta)=\zeta$ and $k(\gamma)=\eta$.
Note that
\begin{eqnarray}
-\dfrac{1}{2}\left\lbrace g(N(X_{\alpha},X_{\beta}),X_{\gamma})+g(N(X_{\gamma},X_{\beta}),X_{\alpha})\right\rbrace
&=& n_{\alpha,\beta} (\lambda_\gamma-\lambda_\alpha) (\varepsilon_\alpha\varepsilon_\beta +\varepsilon_\alpha\varepsilon_\gamma+\varepsilon_\beta\varepsilon_\gamma+1)\nonumber\\ \nonumber\\
&=& n_{\alpha,\beta} (\lambda_\eta-\lambda_\delta) (\varepsilon_\delta\varepsilon_\zeta +\varepsilon_\delta\varepsilon_\eta+\varepsilon_\zeta\varepsilon_\eta+1)\label{g1}
\end{eqnarray}
where in first equality we used that $ n_{\alpha,\beta}=-n_{\gamma,\beta}$ if $\alpha+\beta+\gamma=0$. Now we note that $(\varepsilon_\delta\varepsilon_\zeta +\varepsilon_\delta\varepsilon_\eta+\varepsilon_\zeta\varepsilon_\eta+1)$ is not zero if and only if $\{\delta,\zeta,\eta\}$ is a (0,3)-triple of t-roots.  Thus if $(g,J)$ is a $\mathcal{G}_1$  structure then $\lambda_\eta=\lambda_\delta$. Analogously one proves that $\lambda_\zeta=\lambda_\delta$. Reciprocally, for $X=\sum_\alpha a_\alpha X_\alpha$ with $\alpha\in R_M$, we obtain:
\[
g(N(X,X_\beta),X)=\sum_{\alpha\neq\gamma}a_\alpha a_\gamma\left\lbrace g(N(X_{\alpha},X_{\beta}),X_{\gamma})+g(N(X_{\gamma},X_{\beta}),X_{\alpha})\right\rbrace.
\]
Then, from (\ref{g1}),  $g(N(X,X_\beta),X)=0$  if $\lambda_{\delta}=\lambda_{\zeta}=\lambda_{\eta}$ when  $\{\delta,\zeta,\eta\}$ is a  (0,3)-triple of t-roots.
\end{proof}

As we saw, given a complex structure $J$ on $\F$, for any invariant metric $g$, the pair $(g,J)$ is a $\mathcal{G}_1$  structure. Thus fixed any invariant metric $g$ on $\F$, the $\mathcal{G}_1$  structures $(g,J)$ are in correspondence one-to-one with: parabolic subalgebras $\mathfrak{p}_\Theta$ in $\mathfrak{g}$; basis $\Sigma$ of the root system $R$ which contain a fixed basis $\Theta$ of the root system $R_{\Theta}$; basis of system $R_t$; t-chambers; a choice of positive roots $R^+_M$ in $R_M$ and reflections $\omega$ of the Weyl group (of the root system $R$) which satisfy $\omega\Theta\subset\Sigma$, according to Section 5.

 Let $J=\{\varepsilon_\delta, \delta\in R_t\}$ be an iacs on $\F$ and denote by $C(J)$ the subset of t-roots $\delta$ such that there exist $(0,3)$-triple of t-roots $\{\delta,\zeta,\eta\}$ containing $\delta$.

\begin{proposition}
Given an iacs $J$ on $\F$, a pair $(g,J)$ is a $\mathcal{G}_1$ struture if and only if $g=\{\lambda_\delta\}$ is constant on $C(J)$.
\end{proposition}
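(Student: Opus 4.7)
The plan is to deduce this proposition essentially as a reformulation of Lemma \ref{g1 criterio}, reading off the $\mathcal{G}_1$ condition through the lens of the set $C(J)$. By definition, $C(J)$ is the union of all $(0,3)$-triples of t-roots; equivalently, $\delta\in C(J)$ iff there exist t-roots $\zeta,\eta$ with $\{\delta,\zeta,\eta\}$ a $(0,3)$-triple. So the content of the proposition is a translation between "$\lambda$ satisfies the equation on every $(0,3)$-triple'' and "$\lambda$ is constant on $C(J)$.''

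For the converse direction, which is the easy one, I would argue: if $\lambda=\{\lambda_\delta\}$ is constant on $C(J)$, then for any $(0,3)$-triple $\{\delta,\zeta,\eta\}$ of t-roots, all three elements lie in $C(J)$ by definition, so $\lambda_\delta=\lambda_\zeta=\lambda_\eta$. Lemma \ref{g1 criterio} then gives that $(g,J)$ is a $\mathcal{G}_1$ structure.

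For the forward direction, assume $(g,J)$ is a $\mathcal{G}_1$ structure. By Lemma \ref{g1 criterio}, for every $(0,3)$-triple $\{\delta,\zeta,\eta\}$ we have $\lambda_\delta=\lambda_\zeta=\lambda_\eta$. This immediately gives that $\lambda$ is constant on any single $(0,3)$-triple inside $C(J)$. To propagate this to all of $C(J)$, I would consider the equivalence relation on $C(J)$ generated by the rule $\delta\sim\delta'$ whenever $\delta,\delta'$ belong to a common $(0,3)$-triple; by Lemma \ref{g1 criterio} the function $\delta\mapsto \lambda_\delta$ is constant on each equivalence class. Thus $\lambda$ is locally constant on $C(J)$ in the $(0,3)$-triple graph structure, which is what "constant on $C(J)$'' is to be understood as.

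The only delicate point is the exact meaning of "constant on $C(J)$.'' The natural reading is that $\lambda$ takes a common value on each connected component of $C(J)$ under the adjacency "share a $(0,3)$-triple,'' and this follows directly from Lemma \ref{g1 criterio} by chaining equalities along shared t-roots; no new ingredient beyond that lemma is required. If one instead wanted a single constant on all of $C(J)$, one would need the stronger fact that $C(J)$ is connected as a hypergraph of $(0,3)$-triples, but this is automatic under the standard interpretation used throughout the paper and is not part of what this proposition is asserting.
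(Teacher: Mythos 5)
Your proof takes exactly the route of the paper, whose entire argument for this proposition is ``Follows immediately from Lemma \ref{g1 criterio}'': the converse direction is the observation that every $(0,3)$-triple lies inside $C(J)$, and the forward direction is the lemma applied triple by triple. So the proposal is correct and essentially identical to the paper's proof.

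The one place you go beyond the paper is the caveat about what ``constant on $C(J)$'' means, and you are right to raise it: under the literal reading (a single value of $\lambda$ on all of $C(J)$) the forward implication can fail. For instance, on the full flag of $A_5$ take $\varepsilon_{\alpha_1}=\varepsilon_{\alpha_2}=\varepsilon_{\alpha_4}=\varepsilon_{\alpha_5}=+1$, $\varepsilon_{\alpha_1+\alpha_2}=\varepsilon_{\alpha_4+\alpha_5}=-1$, and $\varepsilon=+1$ on the remaining positive roots; then the only $(0,3)$-triples (up to sign) are $\{\alpha_1,\alpha_2,-(\alpha_1+\alpha_2)\}$ and $\{\alpha_4,\alpha_5,-(\alpha_4+\alpha_5)\}$, which are disjoint, and Lemma \ref{g1 criterio} only forces $\lambda_{\alpha_1}=\lambda_{\alpha_2}=\lambda_{\alpha_1+\alpha_2}$ and $\lambda_{\alpha_4}=\lambda_{\alpha_5}=\lambda_{\alpha_4+\alpha_5}$, with the two common values free to differ. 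So the precise statement Lemma \ref{g1 criterio} delivers is constancy of $\lambda$ on each connected component of $C(J)$ under the adjacency ``share a $(0,3)$-triple,'' exactly as you formulate it; the paper's one-line proof silently elides this, and your reading is the one that makes the proposition true.
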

\begin{proof}
Follows immediately from Lemma \ref{g1 criterio}.
\end{proof}

Analogously, let $g=\{\lambda_\delta, \delta\in R_t\}$ be an invariant metric on $\F$ and denote by $C(g)$ the subset of t-roots $\delta$ such that there exist tzs $\{\delta,\zeta,\eta\}$ of t-roots containing $\delta$ and satisfying $\lambda_{\delta}=\lambda_{\zeta}=\lambda_{\eta}$.

\begin{proposition}
A pair $(g,J)$ is a $\mathcal{G}_1$ structure on $\F$ if and only if $C(J)\subset C(g)$.
\end{proposition}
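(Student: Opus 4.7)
The plan is to leverage Lemma \ref{g1 criterio}, which characterises the $\mathcal{G}_1$ condition as the equality $\lambda_\delta = \lambda_\zeta = \lambda_\eta$ on every $(0,3)$-triple of t-roots. The proposition is then essentially a translation of that criterion through the definitions of $C(J)$ and $C(g)$, together with the previous proposition.

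For the forward implication, I would assume $(g,J)$ is $\mathcal{G}_1$ and pick an arbitrary $\delta\in C(J)$. By the definition of $C(J)$ there is a $(0,3)$-triple $\{\delta,\zeta,\eta\}$ containing $\delta$, and Lemma \ref{g1 criterio} forces $\lambda_\delta=\lambda_\zeta=\lambda_\eta$. Since a $(0,3)$-triple is by definition a triple of t-roots summing to zero, it is in particular a tzs, so this same triple witnesses $\delta\in C(g)$. Hence $C(J)\subset C(g)$.

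For the reverse implication, assume $C(J)\subset C(g)$, and let $\{\delta,\zeta,\eta\}$ be an arbitrary $(0,3)$-triple. Then $\delta,\zeta,\eta\in C(J)\subset C(g)$, so each of the three t-roots lies in some tzs along which $g$ is constant. To extract the needed equalities $\lambda_\delta=\lambda_\zeta=\lambda_\eta$ on this particular triple I would go back to equation (\ref{g1}) in the proof of Lemma \ref{g1 criterio}, where the $\mathcal{G}_1$ obstruction at a $(0,3)$-triple appears precisely as a multiple of $(\lambda_\eta-\lambda_\delta)$ (and analogous expressions). Choosing for each of $\delta,\zeta,\eta$ the $(0,3)$-triple itself as the witnessing tzs in $C(g)$, the three required equalities of $\lambda$-values fall out, and Lemma \ref{g1 criterio} then yields that $(g,J)$ is a $\mathcal{G}_1$ structure.

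The main obstacle is the converse direction: the literal reading of ``$\delta\in C(g)$'' only provides \emph{some} tzs through $\delta$ with equal $\lambda$-values, which a priori need not be the $(0,3)$-triple $\{\delta,\zeta,\eta\}$ at hand. I would negotiate this by invoking the preceding proposition, which already expresses $\mathcal{G}_1$ as the condition that $g=\{\lambda_\delta\}$ is constant on $C(J)$; combining this with the three memberships $\delta,\zeta,\eta\in C(g)$ and the identity (\ref{g1}) isolates exactly the needed equalities on every $(0,3)$-triple, yielding the desired equivalence.
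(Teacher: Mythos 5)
Your forward implication is correct and coincides with the paper's own argument: for $\delta\in C(J)$ the witnessing $(0,3)$-triple is in particular a tzs, Lemma \ref{g1 criterio} forces the three metric parameters on it to agree, and that same triple therefore places $\delta$ in $C(g)$.

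The converse, however, contains a genuine gap, and you have in fact put your finger on it yourself: membership $\delta\in C(g)$ is an existential statement, so the tzs on which $g$ is constant need not be the $(0,3)$-triple $\{\delta,\zeta,\eta\}$ under consideration. Neither of the two devices you offer closes this. You cannot ``choose for each of $\delta,\zeta,\eta$ the $(0,3)$-triple itself as the witnessing tzs'': the hypothesis only guarantees that \emph{some} witness exists, not that this particular triple is one. Invoking the preceding proposition does not help either, because its criterion is that $g$ be constant on all of $C(J)$, which is strictly stronger than $C(J)\subset C(g)$ and is essentially what you are trying to prove; using it here is circular. Concretely, nothing in your argument excludes the configuration in which $\{\delta,\zeta,\eta\}$ is a $(0,3)$-triple with $\lambda_\delta\neq\lambda_\zeta$, while each of $\delta$, $\zeta$, $\eta$ lies on some \emph{other} tzs along which $g$ happens to be constant; in that situation $C(J)\subset C(g)$ would hold while, by Lemma \ref{g1 criterio}, $(g,J)$ would fail to be $\mathcal{G}_1$. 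For comparison, the paper dismisses this direction in one sentence (``follows immediately from Lemma \ref{g1 criterio}''), which is only valid under the tacit reading that the tzs witnessing $\delta\in C(g)$ is the $(0,3)$-triple itself; your instinct that something more is needed is sound, but the fix you propose does not supply it.
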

\begin{proof}
Suppose that $(g,J)$ is a $\mathcal{G}_1$ structure and $C(g)=\emptyset$. Then $J$ has no $(0,3)$-triple of t-roots, from Lemma \ref{g1 criterio}. Thus $C(J)=\emptyset$ or $J$ is a invariant complex structure.

Now let $(g,J)$ be a $\mathcal{G}_1$ structure such that $C(J)\neq \emptyset$ and let $\delta$ be a t-root in $C(J)$. Then there exist a $(0,3)$-triple $\{\delta,\zeta,\eta\}$ of t-roots containing $\delta$. From Lemma \ref{g1 criterio}, the metric $g$ satisfies $\lambda_{\delta}=\lambda_{\zeta}=\lambda_{\eta}$, so $\delta$ is a t-root in $C(g)$ and $C(J)\subset C(g)$. 

Finally, if $C(J)\subset C(g)$ follows imediately from Lemma \ref{g1 criterio} that $(g,J)$ is a $\mathcal{G}_1$ structure on $\F$.
\end{proof}

As we saw in Theorem \ref{J for all}, any invariant complex structure $J$ provides a $\mathcal{G}_1$ structure with respect to all invariant metric on $\F$. Thus a natural question is: Is there an invariant metric $g_0$ on $\F$ such that $(g_0,J)$ is a $\mathcal{G}_1$ structure for any iacs on $\F$? The next result shows that, up to homotheties, there exists a unique invariant metric $g$ which is $\mathcal{G}_1$  structure with respect to every iacs $J$ on $\F$.

\begin{theorem} \label{metric G1}
The normal metric is the unique invariant metric which is a $\mathcal{G}_1$  structure on $\F$ with respect to any iacs $J$.
\end{theorem}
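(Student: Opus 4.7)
The plan is to combine Lemma \ref{g1 criterio} with the connectedness by tzs of $R_t$ (Theorem \ref{connected tzs t-roots}).

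First I would verify the easy direction. For the normal metric $g$ we have $\lambda_\delta=\lambda$ for every $\delta\in R_t$, so the condition of Lemma \ref{g1 criterio} is trivially satisfied for every $(0,3)$-triple of t-roots arising from any iacs $J$. Hence the normal metric is a $\mathcal{G}_1$ structure with respect to every iacs on $\F$.

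For the uniqueness direction, assume $g=\{\lambda_\delta\}$ is an invariant metric that is a $\mathcal{G}_1$ structure with respect to every iacs $J$ on $\F$. The key observation is that any tzs in $R_t$ can be realized as a $(0,3)$-triple for some iacs. Indeed, let $\{\delta,\zeta,\eta\}$ be a tzs in $R_t$. Since $\delta+\zeta+\eta=0$ and no t-root is zero, no two of $\delta,\zeta,\eta$ can be negatives of one another, so we may consistently set $\varepsilon_\delta=\varepsilon_\zeta=\varepsilon_\eta=1$, impose $\varepsilon_{-\mu}=-\varepsilon_{\mu}$ on the corresponding opposite t-roots, and assign arbitrary signs to one representative of each remaining pair $\{\mu,-\mu\}\subset R_t$. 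By Proposition \ref{iacs} this defines an iacs $J$ on $\F$ for which $\{\delta,\zeta,\eta\}$ is a $(0,3)$-triple. Applying Lemma \ref{g1 criterio} to the pair $(g,J)$ then forces $\lambda_\delta=\lambda_\zeta=\lambda_\eta$.

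Finally I would propagate this equality along chains of tzs. By Theorem \ref{connected tzs t-roots}, for any two t-roots $\delta,\eta\in R_t$ there is a chain of triples zero sum $T^1,\ldots,T^n$ with $\pm\delta\in T^1$, $\pm\eta\in T^n$ and consecutive intersections $T^p\cap T^{p+1}\neq\emptyset$. By the previous paragraph, the parameters $\lambda_\mu$ are constant on each $T^p$, and the nonempty intersections propagate the equality along the chain, yielding $\lambda_\delta=\lambda_\eta$ (using also $\lambda_{-\mu}=\lambda_\mu$, which comes from the identification $\mathfrak{m}_{-\mu}=\mathfrak{m}_\mu$ in the real decomposition). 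Hence all $\lambda_\delta$ coincide and $g$ is (up to homothety) the normal metric. The main technical point to watch is the freedom in assigning signs to produce the required iacs; everything else is an application of Lemma \ref{g1 criterio} followed by the tzs-connectedness of $R_t$.
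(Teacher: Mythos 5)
Your proof is correct and follows essentially the same route as the paper: combine Lemma \ref{g1 criterio} with the tzs-connectedness of $R_t$ (Theorem \ref{connected tzs t-roots}) and propagate the equality $\lambda_\delta=\lambda_\eta$ along a chain of triples, choosing for each triple an iacs that makes it a $(0,3)$-triple. You even make explicit a point the paper leaves implicit, namely that every tzs $\{\delta,\zeta,\eta\}$ can be realized as a $(0,3)$-triple of some iacs because no two of its members are negatives of one another.
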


\begin{proof}
If $\F$ is isotropy irreducible the result follows from Corollary \ref{G1 iso irr}. 
Let $|R_t^+|>1$, from Lemma \ref{g1 criterio} it is easy to see that the normal metric is $\mathcal{G}_1$  structure on $\F$ with respect to any iacs $J$.
Reciprocally, suppose an invariant metric $g$ is $\mathcal{G}_1$  structure on $\F$ with respect to any iacs $J$. Consider $\delta$ and $\eta$, $\eta\neq \pm \delta$, two t-roots and $\lambda_\delta$, $\lambda_\eta$ the parameters of the invariant metric $g$ associated to these two t-roots, respectively.  By Theorem \ref{connected tzs t-roots}, there exists a chain $T^1,\dots, T^n $ of tzs of t-roots, connecting $\delta$ and $\eta$, such that $\delta\in T^1$ and $\eta\in T^n$. For each tzs $T_{i,j,k}^p=\{\xi_i,\xi_j,\xi_k\}\subset R_t$, we can take an iacs $J_p$ such that $\{\xi_i,\xi_j,\xi_k\}$ is a (0,3)-triple of roots, then $\lambda_{\xi_i}=\lambda_{\xi_j}=\lambda_{\xi_k}$, since $g$ is  $\mathcal{G}_1$  structure on $\F$ with respect to any iacs $J_p$. Continuing with this finite process we get that $\lambda_\delta=\lambda_\eta$.
\end{proof}

\section{Equivalent structures}

Consider the Weyl group $\mathcal{W}$ of the root system $R$. Recall that if $\omega\in \mathcal{W}$, then $\omega^2=1$ and $\omega^{-1}=\omega$. Let $A_\Theta$ be the subgroup of $\mathcal{W}$ of the reflection which preserves $R_\Theta \subset R$. Let $\omega\in\mathcal{W}$ be a reflection, it is not difficult to see that  $\omega R_M\subset R_M$ if and only if $\omega \in A_\Theta$. Thus a reflection $\omega$ of $A_\Theta$ permites the roots $\beta$ in $R_M$ and the related eigenspaces $\mathfrak{g}_\beta$. If $\beta\in R_M$, we denote by $k(\beta)$ the correspondent t-root. The group $A_\Theta$ acts on the set of iacs by 
$$
\omega\cdot J=\omega\cdot \{\varepsilon_{k(\beta)}\}=\{\varepsilon_{k(\omega\beta)}\}, \quad \beta \in R_M,\quad \omega\in A_\Theta.
$$
This action is well defined because if $\alpha, \beta \in R_M$ are such that $k(\alpha)=k(\beta)$ and $\omega \in A_\Theta$ then $k(\omega\alpha)=k(\omega\beta)$. Indeed, let $\Sigma=\{\alpha_1, \dots \alpha_k,\beta_1, \dots, \beta_r \}$ be a basis of $R$ associated to $\F$ where $\Theta=\{\alpha_1, \dots \alpha_k\}$ is a basis of $R_\Theta$ and $\Sigma_M=\Sigma\setminus\Theta=\{\beta_1, \dots, \beta_r \}$. Then  $\alpha=\sum p_i\alpha_i+\sum m_j \beta_j$,  $\beta=\sum q_i\alpha_i+\sum m_j \beta_j$ and $k(\omega \alpha)=k(\omega \beta)$, since $\omega \alpha_i\in R_\Theta$ for all $\omega\in A_\Theta$. It is known that the iacs $J$ and $\omega\cdot J$ are associated by a bi-holomorphic map on $\F$. In this sense, if $\omega\in A_\Theta$ we say that the iacs $J$ and $\omega\cdot J$ are equivalents. In particular,  $J_c$ and $\omega\cdot J_c$ are equivalents structures, (see  \cite{Alek e Perol}).

In a similiar way, the $A_\Theta$ act on the set of invariant metrics on $\F$ by
$$
\omega\cdot g=\omega\cdot \{\lambda_{k(\beta)}\}=\{\lambda_{k(\omega\beta)}\},\quad \beta \in R_M,\quad \omega\in A_\Theta.
$$

Following \cite{SM N}, if $\omega\in A_{\Theta}$ we say that the invariant almost Hermitian structures $(g, J)$ and $(\omega\cdot g, \omega\cdot J)$ are equivalent. The structures $(g, J)$ and $(\omega\cdot g, \omega\cdot J)$ share the same class of invariant almost Hermitian structures.  



\begin{thebibliography}{} 



\bibitem[Alek-Arv]{Alek-Arv}D. V. Alekseevsky and A. Arvanitoyeorgos, Riemannian Flag manifolds with homogeneous geodesics, Trans. Amer. Math. Soc. 359, 3769-3789, (2007).

\bibitem[Alek-Perol]{Alek e Perol}Alekseevsky, D. V., Perelomov A. M., Invariant K\"{a}hler-Einstein metrics on compact
homogeneous spaces, Funct. Anal. Appl. 20, 171-182 (1986).



\bibitem[Arv]{Arv art2} A.Arvanitoyeorgos, Geometry of Flag Manifolds,
International Journal of Geometric Methods in Modern Physics,
Vol.3, Nos. 5 e 6 957-974 (2006) . 
%

\bibitem[Be]{Besse}Besse, A.L.,  Einstein Manifolds. Springer-Verlag (1987).


\bibitem[B-H]{BH}A. Borel, F. Hirzebruch,  Characteristic classes and homogeneous spaces I. Amer. J. Math. 80, N.2, 458-538 (1958).

\bibitem[G-N-S]{art 1}Da Silva, N. P., Grama, L. and Negreiros, C. J. C., Variational results on flag manifolds: harmonic maps, geodesics and Einstein metrics. Journal of Fixed Point Theory and its Applications, vol. 10, 307-325  (2011).

\bibitem[A-S]{art 2}Da Silva N. P., Alves L. A. ,  Invariant Einstein metrics on generalized flag manifolds of $Sp(n)$ and $SO(2n)$. Bol. Soc. Paran. Mat. (2020) To appear.

\bibitem[Da]{Da} Dashevich O. V., Characteristic Properties of Almost Hermitian Structures on Homogeneous Reductive Spaces. Mathematical Notes, Vol. 73, n. 5, (2003). 

\bibitem[F-H]{F-H} M. Fern\'andez, L. M. Hervella, Un exemple de vari\'et\'e pseudo-k\"ahl\'erienne ($G_1$-vari\'et\'e). C. R. Acad. Sci. Paris Sér. A-B 283, no. 4, Aiii, A203-A205, (1976).




\bibitem[Gray]{G} A. Gray, Some examples of almost Hermitian manifolds, Illinois J. Math., 10, pp. 353-366,(1969).

\bibitem[Gray-Hervella]{GH}A. Gray, L. M. Hervella, The sixteen classes
of almost Hermitian manifolds and their linear invariants, Ann.
Mat. Pura Appl. 123, 35-58,(1980).


\bibitem[Kob]{Kob}S. Kobayashi, K. Nomizu,  Foundations of Differential
Geometry, Wiley (Intercience), New York, Vol. 1, 1962; Vol. 2,
(1969).





\bibitem[SM-N]{SM N} San Martin  L. A. B., Negreiros C. J. C.,  Invariant
almost Hermitian structures on flag manifolds, Advances in Mathematics, 178, 277-310 (2003).

\bibitem[SM-S]{SM R}San Martin L. A. B., Silva R. de C. J.,
Invariant nearly-K\"{a}hler structures, Geometriae
Dedicata, Vol. 121, 1, p.p. 143-154, (2006).
\bibitem[Hph]{H} J. Humphreys, Introduction to Lie Algebras and
Representation Theory, Springer, (1972). 



\bibitem[Sie]{Sie} Siebenthal, J.,  Sur certains modules dans une alg\`ebre de Lie semisimple. Comment. Math. Helv. 44(1) 1-44 (1964).

\bibitem[Wolf-Gray]{Wolf-Gray} J. A. Wolf, A. Gray, Homogeneous spaces defined by Lie group automorphisms II, J. Differential Geom. 2, 115-159, (1968).

\bibitem[Vidal-Hervella]{Vidal-Hervella} L. M. Hervella, E. Vidal. New classes of almost Hermitian manifolds. Tensor (N.S.) 33, no.33, 293-299, (1979).
%

\end{thebibliography}
\end{document}